\numberwithin{equation}{section}
\renewcommand{\phi}{\varphi}
\newcommand{\e}{\epsilon}
\newtheorem{theorem}{Theorem}
\newtheorem{lemma}{Lemma}
\newtheorem{condition}{Condition}
\newtheorem{remark}{Remark}
\newtheorem{example}{Example}
\newtheorem{definition}{Definition}
    \newcommand\figcaption{\def\@captype{figure}\caption}
    \newcommand\tabcaption{\def\@captype{table}\caption}
\journal{Int. J. Nonlin. Mech.}
\begin{document}

\begin{frontmatter}
\title{Large deviations for stochastic nonlinear systems of\\
slow-fast diffusions with non-Gaussian L\'evy noises}

\author[a1]{Shenglan Yuan}
\author[a2]{Ren\'e Schilling}
\author[a3]{Jinqiao Duan}

\address[a1]{Institut f\"ur Mathematik, Universit\"at Augsburg,
86135 Augsburg, Germany\\e-mail: shenglan.yuan@math.uni-augsburg.de}
\address[a2]{Institut f\"{u}r Mathematische Stochastik in Technische Universit\"{a}t Dresden, Dresden, D-01069, Germany\\e-mail: rene.schilling@tu-dresden.de}
\address[a3]{Department of Applied Mathematics in Illinois Institute of Technology, Chicago, IL 60616, USA\\e-mail: duan@iit.edu}
\date{\today}
\begin{abstract}
We establish the large deviation principle for the slow variables in slow-fast dynamical system  driven by both Brownian noises and L\'evy noises. The fast variables evolve at much faster time scale than the slow variables, but they are fully inter-dependent. We study the asymptotics of
the logarithmic functionals of the slow variables in the three regimes based on viscosity solutions to the Cauchy problem for a sequence of partial integro-differential equations. We also verify the comparison principle for the related Cauchy problem to show the existence and uniqueness of the limit for viscosity solutions.
\end{abstract}
\begin{keyword}
Large deviations, slow-fast dynamical system, L\'evy noises, viscosity solutions, comparison principle.
\MSC[2020]{60F10, 49L25, 37H10}
\end{keyword}
\end{frontmatter}

\section{Introduction}
\label{secton1}
Many nonlinear dynamical systems under random influences often involve the interplay of slow and fast variables. For instance, climate-weather interaction models, geophysical flows, macromolecules and planetary motion \cite{MW06,Pa01,Qi01}. The slow-fast systems described by stochastic differential equations are thought to be appropriate mathematical models for those perturbed nonlinear dynamical systems.

We consider the following nonlinear slow-fast stochastic dynamical system driven by both Brownian noises and L\'evy noises\,:
\begin{eqnarray}\label{1}
\left\{\begin{array}{l}
 dX^{\varepsilon,\delta}_{t}=\varepsilon b_{1}(X^{\varepsilon,\delta}_{t-},Y^{\varepsilon,\delta}_{t-})dt+\sqrt{2\varepsilon}\sigma_{1}(X^{\varepsilon,\delta}_{t-},Y^{\varepsilon,\delta}_{t-})dW_{t}^{(1)}\\
~~~~~~~~~~~~+\varepsilon\int_{\mathbb{R}\setminus \{0\}} k_{1}(X^{\varepsilon,\delta}_{t-},Y^{\varepsilon,\delta}_{t-},z)\tilde{N}^{(1),\frac{1}{\varepsilon}}(dz,dt),\quad X^{\varepsilon,\delta}_{0}=x_{0}\in\mathbb{R},  \\[0.5ex]
dY^{\varepsilon,\delta}_{t}=\frac{\varepsilon}{\delta}b_{2}(X^{\varepsilon,\delta}_{t-},Y^{\varepsilon,\delta}_{t-})dt+\sqrt{\frac{2\varepsilon}{\delta}}
\sigma_{2}(X^{\varepsilon,\delta}_{t-},Y^{\varepsilon,\delta}_{t-})(\rho dW_{t}^{(1)}+\sqrt{1-\rho^{2}}dW_{t}^{(2)})\\
~~~~~~~~~~~~+\int_{\mathbb{R}\setminus \{0\}} k_{2}(X^{\varepsilon,\delta}_{t-},Y^{\varepsilon,\delta}_{t-},z)\tilde{N}^{(2),\frac{\varepsilon}{\delta}}(dz,dt),\quad Y^{\varepsilon,\delta}_{0}=y_{0}\in\mathbb{R},
 \end{array}\right.
\end{eqnarray}
where $\tilde{N}^{(1),\frac{1}{\varepsilon}}(\cdot,\cdot), \tilde{N}^{(2),\frac{\varepsilon}{\delta}}(\cdot,\cdot)$ are independent compensated Poisson random measures
\[
\tilde{N}^{(1),\frac{1}{\varepsilon}}(\cdot,\cdot)=N^{(1),\frac{1}{\varepsilon}}(\cdot,\cdot)-\frac{1}{\varepsilon}\nu_{1}(dz)dt,\quad  \tilde{N}^{(2),\frac{\varepsilon}{\delta}}(\cdot,\cdot)=N^{(2),\frac{\varepsilon}{\delta}}(\cdot,\cdot)-\frac{\varepsilon}{\delta}\nu_{2}(dz)dt,
\]
with associated Poisson random measures $N^{(1),\frac{1}{\varepsilon}}(\cdot,\cdot), N^{(2),\frac{\varepsilon}{\delta}}(\cdot,\cdot)$ and intensity measures
$\frac{1}{\varepsilon}\nu_{1}(dz)dt$, $\frac{\varepsilon}{\delta}\nu_{2}(dz)dt$, in which $\nu_{i}, i=1,2$ are L\'evy measures, i.e., $\sigma$-finite measures on ${\mathbb{R}\setminus \{0\}}$ such that $\int_{\mathbb{R}\setminus \{0\}}(1\wedge z^{2})\nu_{i}(dz)<\infty$. $W^{(1)}, W^{(2)}$ are independent Brownian motions independent of $\tilde{N}^{(1),\frac{1}{\varepsilon}}(\cdot,\cdot), \tilde{N}^{(2),\frac{\varepsilon}{\delta}}(\cdot,\cdot)$, with $\rho\in(-1,1)$ constant. The two small positive parameters $\varepsilon, \delta$ describe the separation of time scale between the slow variable $X^{\varepsilon,\delta}_{t}$ and the fast variable $Y^{\varepsilon,\delta}_{t}$. Indeed, $Y^{\varepsilon,\delta}_{t}$ evolves at faster time scale $s=\frac{t}{\delta}$ than $X^{\varepsilon,\delta}_{t}$ with $\delta<\varepsilon\ll1$.

In the last years, the long time large deviations behavior of slow-fast systems has attracted more and more attention because of the various applications in the fields of statistical physics, engineering, chemistry and financial mathematics \cite{Ba03,DZ10,Ki04}. The behavior of the slow variables on time-scales that are much longer than that over which the fast variables evolve, can be characterized via a large deviation principle \cite{FW12}.

There exist fruitful results of large deviation principle for slow-fast systems based on Brownian noise \cite{Bo16,Ki09,Sp13,WRD}. Feng, Fouque and Kumar derived a large deviation principle for stochastic volatility models in two regimes where the maturity is small, and deduced asymptotic prices for out-of-the-money call and put options in \cite{FFK12}. Moreover, Bardi, Cesaroni and Ghilli \cite{BCG15} proved a large deviation principle for three regimes of stochastic systems affected by a stochastic volatility evolving at a faster time scale, and applied it to the asymptotics of option prices near maturity.

The study of the large deviation principle for slow-fast systems driven by non-Gaussian L\'evy noises is still in its infancy, but some interesting works are emerging \cite{KS12,YD19}. The large deviations for a specific class of slow-fast systems, where the slow process is a diffusion and the fast process is a mean-reverting process driven by a L\'evy process, was studied in \cite{BCS16}. For system of the form \eqref{1} with $\delta=\varepsilon^{2}$, in which the slow and fast jump-diffusions are fully inter-dependent, the slow process has small perturbative noise and the fast process is ergodic, a large deviation principle was established in \cite{KP17}. Their methods based on viscosity sloutions to the Cauchy problem for a sequence of partial integro-differential equations and a construction of the sub- and super-solutions to related Cauchy problems.

The viscosity solution theory is an appropriate tool to deal with various interesting partial integro-differential equations for which there exist no classical solutions \cite{AT16,BI08,Ph98}. In the viscosity method, the comparison principle has been used to prove the convergence of viscosity solutions to the Cauchy problem for partial integro-differential equations \cite{Aw91,Bc08,JK06}. To obtain the large deviation principle for system
\eqref{1}, we follow the viscosity solution approach in \cite{FK06,FFK12,DH08,KP17}.

The main goal of this work is to analyze in detail the structure of the large deviation principle for the slow variables $\{X_{t}^{\varepsilon}\}_{\varepsilon>0}$ of system \eqref{1} with $\delta=\varepsilon^{\alpha}, \alpha>1$  in three different regimes. Utilizing Bryc's inverse Varadhan lemma \cite[Section 4]{DZ10},
the key step is to show that the functionals $\{U^{\varepsilon}\}_{\varepsilon>0}$ satisfying the Cauchy problem \eqref{sU}
converge to some quantity independent of $y$ described by the Cauchy problem \eqref{sU0}.

We first take the relaxed upper and lower semilimits $U_{\uparrow}$ and $U_{\downarrow}$ of $\{U^{\varepsilon}\}_{\varepsilon>0}$ for the Cauchy problem \eqref{sU}, and then obtain the upper- and lower-semicontinuous functions $\hat{U}$ and $\check{U}$, respectively. Subsequently, by using an indexing set $\lambda\in\Lambda$, we construct a family of operators $\hat{H}^{\lambda}$ and $\check{H}^{\lambda}$,
 such that $\hat{U}$ is a subsolution of the Cauchy problem for the operator $\hat{H}=\inf_{\lambda\in\Lambda}\{\hat{H}^{\lambda}\}$, and $\check{U}$ is a supersolution to the Cauchy problem for operator $\check{H}=\sup_{\lambda\in\Lambda}\{\check{H}^{\lambda}\}$. After that we demonstrate a comparison principle between subsolution $\hat{U}$ and supersolution $\check{U}$. Finally, we check that this comparison principle implies convergence of solutions $\{U^{\varepsilon}\}_{\varepsilon>0}$ for the Cauchy problem \eqref{sU} with $H^{\varepsilon}$ on the compact subsets of $[0,T]\times\mathbb{R}\times\mathbb{R}$ to the unique viscosity solution $U^{0}$ for the Cauchy problem \eqref{sU0} with $H^{0}$.

This paper is organized as follows. In Section \ref{secton2}, we provide some precise conditions for the slow-fast system, and describe the Cauchy problem \eqref{sU} satisfied by $\{U^{\varepsilon}\}_{\varepsilon>0}$. In Section \ref{secton3}, we introduce the limit Hamiltonian $H^{0}$ that has different forms in the three regimes depending on $\alpha>1$: supercritical case for $\alpha>2$, critical case for $\alpha=2$ and subcritical case for $\alpha<2$. In Section \ref{secton4}, we derive the comparison principle and present the convergence result for solutions of the Cauchy problem \eqref{sU} with the  Hamiltonian $H^{\varepsilon}$ identified in Section \ref{secton2} to the unique viscosity solution of the Cauchy problem \eqref{sU0} with the limit Hamiltonian $H^{0}$ identified in Section \ref{secton3}. In Section \ref{secton5}, the large deviation principle for the slow variables $\{X_{t}^{\varepsilon}\}_{\varepsilon>0}$ is established, and its application is illustrated with an example. Finally, Section \ref{CFC} summarizes our findings as well as directions for future study.

\section{\bf Preliminaries}\label{secton2}
Throughout this paper, $(\Omega,\mathcal{F},\mathbb{P})$ is a probability space. Let $\mathcal{P}(\mathbb{R})$ denote the space of probability measures on $\mathbb{R}$. We consider Euclidean space $\mathbb{R}^{d}$ endowed with the Borel $\sigma$-algebra $\mathcal{B}(\mathbb{R}^{d})$. For a differentiable function $f: \mathbb{R}^{d}\rightarrow\mathbb{R}$,
the partial derivative with respect to $x$ is denoted by $\partial_{x}f$. As usual,
$C_{b}^{k}(\mathbb{R}^{d})$ is the space of $k$-times bounded continuously differentiable functions, $C_{b}(\mathbb{R}^{d})$ is the space of bounded uniformly continuous functions, and $C_{c}(\mathbb{R}^{d})$ is the space of the continuous functions with compact support. And we use $``:="$ as a way of definition.

To keep notation as simple as possible, we restrict ourselves to one-dimensional case. The variables $X^{\varepsilon,\delta}_{t}$ and $Y^{\varepsilon,\delta}_{t}$  in system \eqref{1} lie in Euclidean state space $\mathbb{R}$ that is locally compact.
Most of the results for multi-dimensional case can be proved in a similar fashion by considering the coordinates.

We assume the following conditions since both stochastic terms enjoy It\^{o}'s isometry. The nonlinear functions $ b_{1}(x,y)$, $b_{2}(x,y)$, $\sigma_{1}(x,y)$, $\sigma_{2}(x,y)$, $k_{1}(x,y,z)$, $k_{2}(x,y,z)$ in system \eqref{1} satisfy
\begin{description}
  \item[$(\textbf{C}_{1})$ \textbf{Lipschitz condition}\,: ] $\exists\, K_{1}>0$ such that, for all $(x_{1},y_{1}), (x_{2},y_{2})\in\mathbb{R}^{2}$,
\begin{align*}
  &|b_{1}(x_{2},y_{2})-b_{1}(x_{1},y_{1})|^{2}+|b_{2}(x_{2},y_{2})-b_{2}(x_{1},y_{1})|^{2}\\
  &+|\sigma_{1}(x_{2},y_{2})-\sigma_{1}(x_{1},y_{1})|^{2}+\int_{\mathbb{R}\setminus \{0\}}|k_{1}(x_{2},y_{2},z)-k_{1}(x_{1},y_{1},z)|^{2}\nu_{1}(dz)\\
  &+|\sigma_{2}(x_{2},y_{2})-\sigma_{2}(x_{1},y_{1})|^{2}+\int_{\mathbb{R}\setminus \{0\}} |k_{2}(x_{2},y_{2},z)-k_{2}(x_{1},y_{1},z)|^{2}\nu_{2}(dz)\\
  &\leq K_{1}[\,(x_{2}-x_{1})^{2}+(y_{2}-y_{1})^{2}\,].
\end{align*}
  \item[$(\textbf{C}_{2})$ \textbf{Growth condition}\,:]  $\exists\, K_{2}>0$ such that, for all $(x,y)\in\mathbb{R}^{2}$,
\begin{align*}
  &|b_{1}(x,y)|^{2}+|b_{2}(x,y)|^{2}+|\sigma_{1}(x,y)|^{2}+|\sigma_{2}(x,y)|^{2}\\
  &+\int_{\mathbb{R}\setminus \{0\}}|k_{1}(x,y,z)|^{2}\nu_{1}(dz)+\int_{\mathbb{R}\setminus \{0\}}|k_{2}(x,y,z)|^{2}\nu_{2}(dz)\\
  &\leq K_{2}(1+x^{2}+y^{2}).
\end{align*}
\end{description}
Conditions ($\textbf{C}_{1}$) and ($\textbf{C}_{2}$) ensure that system \eqref{1} has a unique strong solution, and which is a Markov process; see \cite[Chapter 6]{Ap04}.  Moreover, if the L\'evy measures $\nu_{1},\nu_{2}$ are finite, the growth condition ($\textbf{C}_{2}$) is a consequence of the Lipschitz condition ($\textbf{C}_{1}$).
\begin{remark}
We imposed the conditions ($\textbf{\rm C}_{1}$) and ($\textbf{\rm C}_{2}$) on the mappings $ b_{1}$, $b_{2}$, $\sigma_{1}$, $\sigma_{2}$, $k_{1}$ and $k_{2}$ to guarantee that c\`{a}dl\`{a}g solutions of system \eqref{1} exist. If we take $k_1=k_2=0$,  the Lipschitz condition ($\textbf{\rm C}_{1}$) enables
us to solve system \eqref{1}. Hence in the case of non-zero $k_{1}$ and $k_{2}$, in the presence
of ($\textbf{\rm C}_{1}$), the growth condition ($\textbf{\rm C}_{2}$) is equivalent to the requirement that there exists $K_{2}>0$ such that,
for all $(x,y)\in\mathbb{R}^{2}$,
\begin{equation*}
\int_{\mathbb{R}\setminus \{0\}}|k_{1}(x,y,z)|^{2}\nu_{1}(dz)+\int_{\mathbb{R}\setminus \{0\}}|k_{2}(x,y,z)|^{2}\nu_{2}(dz)\leq K_{2}(1+x^{2}+y^{2}).
\end{equation*}
\end{remark}

For each $f\in C_{b}^{2}(\mathbb{R}^{2})$, the infinitesimal generator $\mathcal{L}^{\varepsilon,\delta}$ of the solution $(X^{\varepsilon,\delta},Y^{\varepsilon,\delta})$ for system \eqref{1} is
{\small\begin{align}\nonumber
\mathcal{L}^{\varepsilon,\delta}f(x,y)=&\varepsilon\left(b_{1}(x,y)\partial_{x}f(x,y)+\sigma_{1}^{2}(x,y)\partial_{xx}^{2}f(x,y)\right)+\frac{2\varepsilon}{\sqrt{\delta}}\rho\sigma_{1}(x,y)\sigma_{2}(x,y)\partial_{xy}^{2}f(x,y)\\ \nonumber
  &+\frac{1}{\varepsilon}\int_{\mathbb{R}\setminus \{0\}}\big(f(x+\varepsilon k_{1}(x,y,z),y)-f(x,y)-\varepsilon k_{1}(x,y,z)\partial_{x}f(x,y)\big)\nu_{1}(dz)\\ \nonumber
  &+\frac{\varepsilon}{\delta}\Big[\,b_{2}(x,y)\partial_{y}f(x,y)+\sigma_{2}^{2}(x,y)\partial_{yy}^{2}f(x,y)\\ \label{LL}
  &+\int_{\mathbb{R}\setminus \{0\}}\big(f(x,y+k_{2}(x,y,z))-f(x,y)-k_{2}(x,y,z)\partial_{y}f(x,y)\big)\nu_{2}(dz)\,\Big].
\end{align}}

In order to understand the two-scale $\varepsilon, \delta\rightarrow0$ limit behaviors of the slow variables $X_{t}^{\varepsilon,\delta}$, we introduce the virtual fast process $Y^{x}$, which satisfies
\begin{align}\nonumber
dY_{t}^{x}&=b_{2}(x,Y_{t-}^{x})dt+\sqrt{2}
\sigma_{2}(x,Y_{t-}^{x})\Big(\rho dW_{t}^{(1)}+\sqrt{1-\rho^{2}}dW_{t}^{(2)}\Big)\\ \label{YYx}
&~~~~~+\int_{\mathbb{R}\setminus \{0\}}k_{2}(x,Y_{t-}^{x},z)\tilde{N}^{(2)}(dz,dt),\quad Y_{0}^{x}=y_{0}\in\mathbb{R},
\end{align}
where $x$ is fixed. The above equation is originated from equation for $Y_{t}^{\varepsilon,\delta}$ in system \eqref{1} by setting $X_{t}^{\varepsilon,\delta}$ to $x$ and rescaling time $t\rightarrow\frac{\delta}{\varepsilon}t$. The infinitesimal generator $\mathcal{L}^{x}$ of $Y^{x}$ is
\begin{align}\nonumber
\mathcal{L}^{x}f(y)=&b_{2}(x,y)\partial_{y}f(y)+\sigma^{2}_{2}(x,y)\partial_{yy}^{2}f(y) \\ \label{LLx}
&+\int_{\mathbb{R}\setminus \{0\}}\big(f(y+k_{2}(x,y,z))-f(y)-k_{2}(x,y,z)\partial_{y}f(y)\big)\nu_{2}(dz),
\end{align}
where $f\in C_{b}^{2}(\mathbb{R})$. For fixed $x, p\in\mathbb{R}$, define the following perturbed generator $\mathcal{L}^{x,p}$\,:
\begin{equation}\label{LLp}
\mathcal{L}^{x,p}f(y)=\mathcal{L}^{x}f(y)+2\rho\sigma_{1}(x,y)\sigma_{2}(x,y)\partial_{y}f(y)p,
\end{equation}
and let $Y^{x,p}$ be the process corresponding to the generator $\mathcal{L}^{x,p}$ as in \eqref{LLp}.

If there is no jump term in the right side of \eqref{YYx}, i.e., without the intergral term, then the equation
\begin{equation}\label{tilde}
d\tilde{Y}_{t}^{x}=b_{2}(x,\tilde{Y}_{t}^{x})dt+\sqrt{2}\sigma_{2}(x,\tilde{Y}_{t}^{x})\Big(\rho dW_{t}^{(1)}+\sqrt{1-\rho^{2}}dW_{t}^{(2)}\Big),\quad \tilde{Y}_{0}^{x}=y_{0}\in\mathbb{R},
\end{equation}
defines a Markov process $\tilde{Y}^{x}$ with generator $\mathcal{\tilde{L}}^{x}$ as follow\,:
\begin{equation}\label{lx}
\mathcal{\tilde{L}}^{x}f(y)=b_{2}(x,y)\partial_{y}f(y)+\sigma^{2}_{2}(x,y)\partial_{yy}^{2}f(y),\quad\text{for}~f\in C_{b}^{2}(\mathbb{R}).
\end{equation}
The scale and speed measures of the process $\tilde{Y}^{x}$ in \eqref{tilde} are given by
$$
s(y):=\exp\{-\int_{-\infty}^{y}\frac{b_{2}(x,r)}{\sigma^{2}_{2}(x,r)}dr\},\,~~\, m(y):=\frac{1}{\sigma^{2}_{2}(x,y)s(y)}.
$$
Denoting $dS(y):=s(y)dy$ and $dM(y):=m(y)dy$, we have
$$
\mathcal{\tilde{L}}^{x}f(y)=\frac{d}{dM}\left(\frac{df(y)}{dS}\right).
$$
There exists a unique probability measure
\begin{equation}\label{piy}
\pi(dy):=\frac{m(y)}{\int_{\mathbb{R}}m(y)dy}dy
\end{equation}
such that $\int_{\mathbb{R}}\mathcal{\tilde{L}}^{x}f(y)\pi(dy)=0$; see \cite[Chapter 15]{KT81}.

Let $g\in C_{b}(\mathbb{R})$ and define the following functionals\,:
\begin{equation}\label{VV}
V^{\varepsilon,\delta}(t,x,y):=\mathbb{E}\big[\,g(X_{t}^{\varepsilon,\delta})\,|\,X_{0}^{\varepsilon,\delta}=x,Y_{0}^{\varepsilon,\delta}=y\,\big].
\end{equation}
In general, $V^{\varepsilon,\delta}\in C_{b}([0,T]\times\mathbb{R}\times\mathbb{R})$. Moreover, if $V^{\varepsilon,\delta}\in C^{1,2}([0,T]\times\mathbb{R}\times\mathbb{R})$, then $V^{\varepsilon,\delta}$ solve the following Cauchy problem in the classical sense\,:
\begin{eqnarray}\label{VV}
\left\{\begin{array}{l}
\partial_{t}V(t,x,y)=\mathcal{L}^{\varepsilon,\delta}V(t,x,y),\quad(t,x,y)\in(0,T]\times\mathbb{R}\times\mathbb{R};\\
~~V(0,x,y)=g(x),\quad(x,y)\in\mathbb{R}\times\mathbb{R},
 \end{array}\right.
\end{eqnarray}
where $\mathcal{L}^{\varepsilon,\delta}$ as in \eqref{LL}. When $g(x)=e^{\frac{h(x)}{\varepsilon}}$ with $h\in C_{b}(\mathbb{R})$,
we gain
\begin{equation}\label{VVeps}
V^{\varepsilon,\delta}(t,x,y)= \mathbb{E}\big[\,e^{\frac{h(X_{t}^{\varepsilon,\delta})}{\varepsilon}}\,|\,X_{0}^{\varepsilon,\delta}=x,Y_{0}^{\varepsilon,\delta}=y\,\big].
\end{equation}
Using the logarithmic transform method in \cite{FK06,FS06}, define
\begin{equation}\label{UU}
U^{\varepsilon,\delta}(t,x,y)=\varepsilon\ln V^{\varepsilon,\delta}(t,x,y),
\end{equation}
where $V^{\varepsilon,\delta}$ are taken from \eqref{VVeps}. Inserting $V^{\varepsilon,\delta}(t,x,y)=e^{\frac{U^{\varepsilon,\delta}(t,x,y)}{\varepsilon}}$ into \eqref{VV}, at least informally, \eqref{sU} below is satisfied. In the absence of knowledge on smoothness of $V^{\varepsilon,\delta}$, we can only conclude that $U^{\varepsilon,\delta}$ tackle the Cauchy problem \eqref{sU} in the sense of viscosity solution (Definition \ref{vis}).

\begin{lemma} \label{HUhL}
For each $h\in C_{b}(\mathbb{R})$ depending only on the variable $x$, $U^{\varepsilon,\delta}(t,x,y)$ defined by \eqref{UU} is a viscosity solution of the Cauchy problem\,:
\begin{eqnarray}\label{sU}
\left\{\begin{array}{l}
\partial_{t}U(t,x,y)=H^{\varepsilon,\delta}U(t,x,y),\quad(t,x,y)\in(0,T]\times\mathbb{R}\times\mathbb{R};\\
~~U(0,x,y)=h(x),\quad(x,y)\in\mathbb{R}\times\mathbb{R}.
 \end{array}\right.
\end{eqnarray}
In the above, the nonlinear nonlocal operator $H^{\varepsilon,\delta}$ is the exponential generator\,:
\begin{align} \nonumber
H^{\varepsilon,\delta}U(t,x,y)&=\varepsilon e^{-\frac{U(t,x,y)}{\varepsilon}}\mathcal{L}^{\varepsilon,\delta}e^{\frac{U(t,x,y)}{\varepsilon}}\\ \nonumber
&=\varepsilon\left(b_{1}(x,y)\partial_{x}U(t,x,y)+\sigma_{1}^{2}(x,y)\partial_{xx}^{2}U(t,x,y)\right) +\sigma_{1}^{2}(x,y)(\partial_{x}U(t,x,y))^{2}\\[1mm] \nonumber
&~~~+2\rho\sigma_{1}(x,y)\sigma_{2}(x,y)\left(\frac{1}{\sqrt{\delta}}\partial_{x}U(t,x,y)\partial_{y}U(t,x,y)+\frac{\varepsilon}{\sqrt{\delta}}\partial_{xy}^{2}U(t,x,y)\right)\\[1mm] \nonumber
&~~~+\int_{\mathbb{R}\setminus \{0\}}\Big(e^{\frac{1}{\varepsilon}[U(t,x+\varepsilon k_{1}(x,y,z),y)-U(t,x,y)]}-1-k_{1}(x,y,z)\partial_{x}U(t,x,y)\Big)\nu_{1}(dz)\\ \label{Hed}
&~~~+\frac{\varepsilon^{2}}{\delta}e^{-\frac{U(t,x,y)}{\varepsilon}}\mathcal{L}^{x}e^{\frac{U(t,x,y)}{\varepsilon}},
\end{align}
where $\mathcal{L}^{x}$ defined as \eqref{LLx}.
\end{lemma}
\begin{remark}
Note that $H^{\varepsilon,\delta}$ only operates on the spatial variables $x$ and $y$.  By utilizing similar arguments as in \cite{FK06,FFK12,KP17}, We could certify for Lemma \ref{HUhL}.
\end{remark}

We want to study the large deviation behaviors of the slow variables $X_{t}^{\varepsilon,\delta}$ in system \eqref{1} as both $\varepsilon$ and $\delta$ go to $0$, and we expect different limits behaviors depending on the ratio $\frac{\varepsilon}{\delta}$. Therefore we put $\delta=\varepsilon^{\alpha}, \alpha>1$, and denote the variables $X_{t}^{\varepsilon,\delta}$ and $Y_{t}^{\varepsilon,\delta}$ by $X_{t}^{\varepsilon}$ and $Y_{t}^{\varepsilon}$. Thus, the system \eqref{1} can be rewritten into
{\small\begin{eqnarray}\label{2}
\left\{\begin{array}{l}
 dX_{t}^{\varepsilon}=\varepsilon b_{1}(X_{t-}^{\varepsilon},Y_{t-}^{\varepsilon})dt+\sqrt{2\varepsilon}\sigma_{1}(X_{t-}^{\varepsilon},Y_{t-}^{\varepsilon})dW_{t}^{(1)}\\~~~~~~~~~+\varepsilon\int_{\mathbb{R}\setminus \{0\}}k_{1}(X_{t-}^{\varepsilon},Y_{t-}^{\varepsilon},z)\tilde{N}^{(1),\frac{1}{\varepsilon}}(dz,dt),\quad X_{0}^{\varepsilon}=x_{0}\in\mathbb{R},  \\[1ex]
dY_{t}^{\varepsilon}=\varepsilon^{1-\alpha}b_{2}(X_{t-}^{\varepsilon},Y_{t-}^{\varepsilon})dt+\sqrt{2\varepsilon^{1-\alpha}}
\sigma_{2}(X_{t-}^{\varepsilon},Y_{t-}^{\varepsilon})(\,\rho dW_{t}^{(1)}+\sqrt{1-\rho^{2}}dW_{t}^{(2)})\\~~~~~~~~~+\int_{\mathbb{R}\setminus \{0\}} k_{2}(X_{t-}^{\varepsilon},Y_{t-}^{\varepsilon},z)\tilde{N}^{(2),\varepsilon^{1-\alpha}}(dz,dt),\quad Y_{0}^{\varepsilon}=y_{0}\in\mathbb{R}.
 \end{array}\right.
\end{eqnarray}}
Hence, for notational simplicity, we drop the subscript $\delta$, and write $U^{\varepsilon}$ and $H^{\varepsilon}$ for $U^{\varepsilon,\delta}$ and $H^{\varepsilon,\delta}$, respectively.

For each $x$ and $p$ in $\mathbb{R}$, define
{\small\begin{equation}\label{v}
V^{x,p}(y):=\sigma_{1}^{2}(x,y)p^{2}+\int_{\mathbb{R}\setminus \{0\}}\Big(e^{k_{1}(x,y,z)p}-1-k_{1}(x,y,z)p\Big)\nu_{1}(dz).
\end{equation}}
We suppose that $V^{x,p}$ is bounded from below, i.e., there exists $C^{x,p}>-\infty$ such that
\begin{equation}\label{VK}
V^{x,p}(y)\geq C^{x,p},~\text{for any}\,\,  y\in\mathbb{R}.
\end{equation}
In the following we will assume other three system conditions\,:
\begin{description}
\item[$(\textbf{C}_{3})$\textbf{ Periodic condition}\,:] The functions $b_{1}(x,y)$, $b_{2}(x,y)$, $\sigma_{1}(x,y)$, $\sigma_{2}(x,y)$, $k_{1}(x,y,z)$, $k_{2}(x,y,z)$ in system \eqref{1} are periodic with respect to the variable $y$.
  \item[$(\textbf{C}_{4})$\textbf{ Ergodicity condition}\,:] The perturbed fast process $Y^{x,p}$ with generator $\mathcal{L}^{x,p}$ in \eqref{LLp} is ergodic at every $x$ with respect to it's unique invariant measure.
  \item[$(\textbf{C}_{5})$ \textbf{Lyapunov condition}\,:]  For a positive function $\xi(\cdot)\in C^{2}(\mathbb{R})$ such that $\xi(\cdot)$ has a compact finite level set, and for each $\theta\in(0,1]$ and compact set $K\subset\mathbb{R}$,
      \begin{description}
        \item[$\textbf{(i)}$\,:]  $\forall\,l^{i}\in\mathbb{R}, \forall\,x\in\mathbb{R}, \forall\,p\in K$, there exists a compact set $J^{i}_{c,\theta,K}\subset\mathbb{R}$ such that
\begin{align*}
&  \Big\{y\in\mathbb{R}:-\theta\big(2\varepsilon^{\frac{\alpha}{2}-1}\rho\sigma_{1}(x,y)\sigma_{2}(x,y)\partial_{y}\xi(y)p+\varepsilon^{2-\alpha}e^{-\varepsilon^{\alpha-2}\xi(y)}\mathcal{L}^{x}e^{\varepsilon^{\alpha-2}\xi(y)}\big) \\
  &-\big(|V^{x,p}(y)|+|b_{1}(x,y)p|+\sigma_{1}^{2}(x,y)\big)\leq l^{i}\Big\}\subset J^{i}_{c,\theta,K}.
\end{align*}
        \item[$\textbf{(ii)}$\,:]  $\forall\,l^{ii}\in\mathbb{R}, \forall\,x\in\mathbb{R}, \forall\,p\in K$, there exists a compact set $J^{ii}_{c,\theta,K}\subset\mathbb{R}$ such that
\begin{equation*}
\Big\{y\in\mathbb{R}:-\theta e^{-\xi(y)}\mathcal{L}^{x,p}e^{\xi(y)}-\big(|V^{x,p}(y)|+|b_{1}(x,y)p|+\sigma_{1}^{2}(x,y)\big)\leq l^{ii}\Big\}\subset J^{ii}_{c,\theta,K}.
\end{equation*}
        \item[$\textbf{(iii)}$\,:] $\forall\,l^{iii}\in\mathbb{R}, \forall\,x\in\mathbb{R}, \forall\,p\in K$, there exists a compact set $J^{i}_{c,\theta,K}\subset\mathbb{R}$ such that
\begin{align*}
&  \Big\{y\in\mathbb{R}:-\theta\big(2\rho\sigma_{1}(x,y)\sigma_{2}(x,y)\partial_{y}\xi(y)p+\varepsilon^{2-\alpha}e^{-\varepsilon^{\frac{\alpha}{2}-1}\xi(y)}\mathcal{L}^{x}e^{\varepsilon^{\frac{\alpha}{2}-1}\xi(y)}\big) \\
  &-\big(|V^{x,p}(y)|+|b_{1}(x,y)p|+\sigma_{1}^{2}(x,y)\big)\leq l^{iii}\Big\}\subset J^{iii}_{c,\theta,K}.
\end{align*}
      \end{description}
\end{description}

\section{\bf Limit Hamiltonian $H^{0}$}\label{secton3}
 Our goal is to study the limit of the functionals $\{U^{\varepsilon}\}_{\varepsilon>0}$ described in \eqref{sU} as $\varepsilon\rightarrow0$. Following the viscosity solution approach for the Cauchy problem of partial integro-differential equations (see \cite{JH16}), we need to identify a suitable limit Hamiltonian $H^{0}$, and characterize the limit of $\{U^{\varepsilon}\}_{\varepsilon>0}$ as the unique viscosity solution of an appropriate Cauchy problem with the limit Hamiltonian $H^{0}$.

 Below, we will use formal asymptotic expansions tools to discover the limit Hamiltonian $H^{0}$, which has different forms in the three regimes depending on $\alpha>1$: supercritical case for $\alpha>2$, critical case for $\alpha=2$ and subcritical case for $\alpha<2$. Those formal derivations are complementary to the rigorous ones \cite{BCG15,Bo16,Ki04,Li96}.
\begin{description}
\item[\textbf{ The supercritical case\,: $\alpha>2$}.]  Substituting the asymptotic expansion
\begin{equation}\label{sion}
U^{\varepsilon}(t,x,y)=U^{0}(t,x)+\varepsilon^{\alpha-1}W(t,x,y)
\end{equation}
into the first equation in \eqref{sU}, and collecting terms of $O(1)$ in $\varepsilon$, we get
\begin{align} \nonumber
\partial_{t}U^{0}(t,x)&=\sigma_{1}^{2}(x,y)\big(\partial_{x}U^{0}(t,x)\big)^{2}+b_{2}(x,y)\partial_{y}W(t,x,y)+\sigma_{2}^{2}(x,y)\partial_{yy}^{2}W(t,x,y)\\ \nonumber
&~~~+\int_{\mathbb{R}\setminus \{0\}}\Big(e^{k_{1}(x,y,z)\partial_{x}U^{0}(t,x)}-1-k_{1}(x,y,z)\partial_{x}U^{0}(t,x)\Big)\nu_{1}(dz)\\ \label{superU0}
&={\tilde{\mathcal{L}}}^{x}W(t,x,y)+V^{x, \partial_{x}U^{0}(t,x)}(y).
\end{align}
That is,
\begin{equation}\label{superU00}
{\tilde{\mathcal{L}}}^{x}W(t,x,y)=\partial_{t}U^{0}(t,x)-V^{x, \partial_{x}U^{0}(t,x)}(y),
\end{equation}
where both $U^{0}$ and $W$ are assumed to be independent of $\varepsilon$, and $V^{x, \partial_{x}U^{0}(t,x)}(y)$ as in \eqref{v} with $p=\partial_{x}U^{0}(t,x)$. The equation \eqref{superU00} has a unique solution $W$ with respect to the operator ${\tilde{\mathcal{L}}}^{x}$ from \eqref{lx} in the $y$ variable. Moreover,
\begin{equation}\label{super0U0}
\partial_{t}U^{0}(t,x)=\int_{\mathbb{R}}V^{x, \partial_{x}U^{0}(t,x)}(y)\pi(dy):=H^{0}(x, \partial_{x}U^{0}(t,x)),
\end{equation}
where $\pi(dy)$ as defined in \eqref{piy}.
  \item[\textbf{The critical case\,: $\alpha=2$}.] The first equation in \eqref{sU} with $\alpha=2$ becomes
{\small\begin{align} \nonumber
\partial_{t}U(t,x,y)&=\varepsilon\left(b_{1}(x,y)\partial_{x}U(t,x,y)+\sigma_{1}^{2}(x,y)\partial_{xx}^{2}U(t,x,y)\right)+\sigma_{1}^{2}(x,y)\big(\partial_{x}U(t,x,y)\big)^{2}\\[1mm] \nonumber
&~~~+2\rho\sigma_{1}(x,y)\sigma_{2}(x,y)\left(\varepsilon^{-1}\partial_{x}U(t,x,y)\partial_{y}U(t,x,y)+\partial_{xy}^{2}U(t,x,y)\right)\\[1mm] \nonumber
&~~~+\varepsilon^{-1}b_{2}(x,y)\partial_{y}U(t,x,y)+\varepsilon^{-2}\sigma_{2}^{2}(x,y)\big(\partial_{y}U(t,x,y)\big)^{2}+\varepsilon^{-1}\sigma_{2}^{2}(x,y)\partial_{yy}^{2}U(t,x,y)\\[1mm] \nonumber
&~~~+\int_{\mathbb{R}\setminus \{0\}}\Big(e^{\varepsilon^{-1}[\,U(t,x+\varepsilon k_{1}(x,y,z),y)-U(t,x,y)\,]}-1-k_{1}(x,y,z)\partial_{x}U(t,x,y)\Big)\nu_{1}(dz)\\ \label{6}
&~~~+\int_{\mathbb{R}\setminus \{0\}}\Big(e^{\varepsilon^{-1}[\,U(t,x,y+k_{2}(x,y,z))-U(t,x,y)\,]}-1-\varepsilon^{-1}k_{2}(x,y,z)\partial_{y}U(t,x,y)\Big)\nu_{2}(dz).
\end{align}}
We plug in the equation \eqref{6} the asymptotic expansion
\begin{equation}\label{sion1}
U^{\varepsilon}(t,x,y)=U^{0}(t,x)+\varepsilon W(t,x,y),
\end{equation}
and collect terms that are $O(1)$ in $\varepsilon$, we obtain
{\small\begin{align} \nonumber
\partial_{t}U^{0}(t,x)&=\sigma_{1}^{2}(x,y)\big(\partial_{x}U^{0}(t,x)\big)^{2}+2\rho\sigma_{1}(x,y)\sigma_{2}(x,y)\partial_{x}U^{0}(t,x)\partial_{y}W(t,x,y)\\[1mm] \nonumber
&~~~+b_{2}(x,y)\partial_{y}W(t,x,y)+\sigma_{2}^{2}(x,y)\big(\partial_{y}W(t,x,y)\big)^{2}+\sigma_{2}^{2}(x,y)\partial_{yy}^{2}W(t,x,y)\\[1mm] \nonumber
&~~~+\int_{\mathbb{R}\setminus \{0\}}\Big(e^{k_{1}(x,y,z)\partial_{x}U^{0}(t,x)}-1-k_{1}(x,y,z)\partial_{x}U^{0}(t,x)\Big)\nu_{1}(dz)\\ \nonumber
&~~~+\int_{\mathbb{R}\setminus \{0\}}\Big(e^{W(t,x,y+k_{2}(x,y,z))-W(t,x,y)}-1-k_{2}(x,y,z)\partial_{y}W(t,x,y)\Big)\nu_{2}(dz)\\\label{criU0}
&=e^{-W(t,x,y)}\mathcal{L}^{x, \partial_{x}U^{0}(t,x)}e^{W(t,x,y)}+V^{x, \partial_{x}U^{0}(t,x)}(y).
\end{align}}
Denote $\partial_{t}U^{0}(t,x)$ by $\lambda$ and $\partial_{x}U^{0}(t,x)$ by $p$. Fix $t, x$ and hence $\lambda, p$. The equation $\eqref{criU0}$ can be rewritten as
\begin{equation}\label{7}
e^{-W(t,x,y)}\mathcal{L}^{x,p}e^{W(t,x,y)}+V^{x,p}(y)=\lambda,
\end{equation}
where $\mathcal{L}^{x,p}$ and $V^{x,p}(y)$ are defined by \eqref{LLp} and \eqref{v} respectively. Multiplying both sides of \eqref{7} by $e^{W(t,x,y)}$, we get the following eigenvalues problem:
\begin{equation}\label{9}
\big(\mathcal{L}^{x,p}+V^{x,p}(y)\big)e^{W(t,x,y)}=\lambda e^{W(t,x,y)}.
\end{equation}
Note that the eigenvalue $\lambda$ depends on $x$ and $p$ due to \eqref{7}. Let $H^{0}(x,p):=\lambda$, then by \eqref{criU0}, \eqref{7} and $p=\partial_{x}U^{0}(t,x)$, we get
\begin{equation}\label{cri0U0}
\partial_{t}U^{0}(t,x)=H^{0}(x, \partial_{x}U^{0}(t,x)).
\end{equation}
We will show (in Section \ref{CR}) rigorously that the limiting operator $H^{0}$ is the principal eigenvalue $\lambda$ of the operator $\mathcal{L}^{x,p}+V^{x,p}(y)$ with eigenfunction $e^{W(t,x,y)}$. In order to obtain the principal eigenvalue $H^{0}(x,p)$, we use a Donsker-Varadhan variational representation as in \cite{DV75}. It follows that the principal eigenvalue $H^{0}(x,p)$ of $\mathcal{L}^{x,p}+V^{x,p}(y)$ is given by
\begin{equation}\label{hh0}
H^{0}(x,p)=\sup_{\mu\in\mathcal{P}(\mathbb{R})}\big[\int_{\mathbb{R}}V^{x,p}(y)d\mu(y)-J^{x,p}(\mu)\big].
\end{equation}
Here $V^{x,p}(y)$ stems from \eqref{v}, and the rate function $J^{x,p}(.):\mathcal{P}(\mathbb{R})\rightarrow\mathbb{R}\cup\{+\infty\}$ is defined by
\begin{equation}\label{j}
J^{x,p}(\mu):=-\inf\limits_{f\in D^{++}(\mathcal{L}^{x,p})}\int_{\mathbb{R}}\frac{\mathcal{L}^{x,p}f(y)}{f(y)}d\mu(y),
\end{equation}
where $D^{++}(\mathcal{L}^{x,p})\subset C_{b}(\mathbb{R})$ denotes the domain of $\mathcal{L}^{x,p}$ with functions that are strictly bounded below by a positive constant. Finally note that $H^{0}(x,p)$ in \eqref{hh0} is convex.
  \item[\textbf{The subcritical case\,: $\alpha<2$.}] Plugging the asymptotic expansion
\begin{equation}\label{sion2}
U^{\varepsilon}(t,x,y)=U^{0}(t,x)+\varepsilon^{\frac{\alpha}{2}}W(t,x,y)
\end{equation}
in the first equation in \eqref{sU}, we gain
\begin{align} \nonumber
\partial_{t}U^{0}(t,x)&=\sigma_{1}^{2}(x,y)\big(\partial_{x}U^{0}(t,x)\big)^{2}+2\rho\sigma_{1}(x,y)\sigma_{2}(x,y)\partial_{x}U^{0}(t,x)\partial_{y}W(t,x,y)\\ \nonumber
                      &~~~+\big[\sigma_{2}^{2}(x,y)+\frac{1}{2}\int_{\mathbb{R}\setminus\{0\}}k_{2}^{2}(x,y,z)\nu_{2}(dz)\big]\big(\partial_{y}W(t,x,y)\big)^{2}\\ \label{subU0}
                      &~~~+\int_{\mathbb{R}\setminus \{0\}}\Big(e^{k_{1}(x,y,z)\partial_{x}U^{0}(t,x)}-1-k_{1}(x,y,z)\partial_{x}U^{0}(t,x)\Big)\nu_{1}(dz).
\end{align}
We want to eliminate $W$ and the dependence on $y$ in \eqref{subU0}, and remain with the right hand side of the form $H^{0}(x,p)$ with $p=\partial_{x}U^{0}(t,x)$. Denote $\delta(x,y):=\frac{1}{2}\int_{\mathbb{R}\setminus\{0\}}k_{2}^{2}(x,y,z)\nu_{2}(dz)$, then from
{\small\begin{equation*}
H^{0}(x,p)=V^{x,p}(y)+2\rho\sigma_{1}(x,y)\sigma_{2}(x,y)p\partial_{y}W(t,x,y)+[\sigma_{2}^{2}(x,y)+\delta(x,y)](\partial_{y}W(t,x,y))^{2},
\end{equation*}}
we get
$$
\partial_{y}W(t,x,y)=\frac{\sqrt{H^{0}(x,p)-V^{x,p}(y)+\frac{\rho^{2}\sigma_{1}^{2}(x,y)\sigma_{2}^{2}(x,y)p^{2}}{\sigma_{2}^{2}(x,y)+\delta(x,y)}}-\frac{\rho\sigma_{1}(x,y)\sigma_{2}(x,y)p}{\sqrt{\sigma_{2}^{2}(x,y)+\delta(x,y)}}}{\sqrt{\sigma_{2}^{2}(x,y)+\delta(x,y)}}.
$$
It follows from $(\textbf{C}_{3})$ that $U^{\varepsilon}(t,x,y)$ is periodic with respect to the variable $y$, and then $W(t,x,y)$ is also periodic in the $y$ variable. If $H^{0}(x,p)\geq V^{x,p}(y)$, we have
$$
\sqrt{H^{0}(x,p)-V^{x,p}(y)+\frac{\rho^{2}\sigma_{1}^{2}(x,y)\sigma_{2}^{2}(x,y)p^{2}}{\sigma_{2}^{2}(x,y)+\delta(x,y)}}=\frac{\rho\sigma_{1}(x,y)\sigma_{2}(x,y)p}{\sqrt{\sigma_{2}^{2}(x,y)+\delta(x,y)}},
$$
then we obtain
\begin{equation}\label{hHV}
H^{0}(x,p)=\max_{y\in\mathbb{R}}V^{x,p}(y),\,\,\text{i.e.},\,\,H^{0}(x,\partial_{x}U^{0}(t,x))=\max_{y\in\mathbb{R}}V^{x,\partial_{x}U^{0}(t,x)}(y).
\end{equation}
\end{description}

We have identified the limit Hamiltonian $H^{0}$ in the three different regimes: the supercritical case (when $\alpha>2$), the critical case (when $\alpha=2$), and the subcritical case (when $\alpha<2$). According to \eqref{superU0}, \eqref{criU0} and \eqref{subU0}, it is clear that $U^{0}(t,x)$ fulfils $\partial_{t}U^{0}(t,x)=H^{0}(x,\partial_{x}U^{0}(t,x))$. By the three different expansions \eqref{sion}, \eqref{sion1} and \eqref{sion2}, we gain $U^{0}(0,x)=h(x)$. To summarize, $U^{0}(\cdot,x)$ satisfies
\begin{eqnarray}\label{sU0}
\left\{\begin{array}{l}
\partial_{t}U(t,x)=H^{0}(x,\partial_{x}U(t,x)),\quad(t,x)\in(0,T]\times\mathbb{R};\\
~~U(0,x)=h(x),\quad x\in\mathbb{R},
 \end{array}\right.
\end{eqnarray}
where the limit Hamiltonian $H^{0}$ is given by \eqref{super0U0}, \eqref{hh0} and \eqref{hHV}.

\section{\bf Hamiltonian dynamics}\label{secton4}
In this section, we deduce the main result of the paper by the comparison principle, namely, the convergence result for solutions of the Cauchy problem \eqref{sU} with the Hamiltonian $H^{\varepsilon}$ identified in Section \ref{secton2} on the compact subsets of $[0,T]\times\mathbb{R}\times\mathbb{R}$ to the unique viscosity solution of the Cauchy problem \eqref{sU0} with the limit Hamiltonian $H^{0}$ calculated in Section \ref{secton3}.

\subsection{Convergence of partial integro-differential equation}
Consider a class of compact sets $\mathcal{K}=\{\,K\times\, \Gamma:\,\text{compact}\,K,\,\Gamma\subset\subset\mathbb{R}\}$ in $\mathbb{R}\times\mathbb{R}$. Let $\{H^{\varepsilon}\}_{\varepsilon>0}$ denote a sequence of partial integro-differential operators defined on the domain $D_{+}\bigcup D_{-}$ of functions, where
\begin{equation*}
D_{\pm}:=\{{\pm}f: f\in C^{2}(\mathbb{R}^{2}),\,\lim_{r\rightarrow\infty}\inf_{|z|>r}f(z)=+\infty\}.
\end{equation*}
We will separately consider these two domains $D_{\pm}$ depending on the situation of subsolution
or supersolution. Define domains $\mathcal{D}_{\pm}$ similarly by replacing $\mathbb{R}^{2}$ with $\mathbb{R}$.
Let $\{U^{\varepsilon}\}_{\varepsilon>0}$ be the viscosity solutions of the partial integro-differential equation $\partial_{t}U=H^{\varepsilon}U$
with initial value $h\in C_{b}(\mathbb{R})$.

For above $\{U^{\varepsilon}\}_{\varepsilon>0}$, the relaxed upper semilimit is
\begin{equation*}
  U_{\uparrow}:=\sup_{y}\{\limsup_{\varepsilon\rightarrow0^{+}}U^{\varepsilon}(t_{\varepsilon},x_{\varepsilon},y):\exists\,(t_{\varepsilon},x_{\varepsilon},y)\in[0,T]\times K\times \Gamma,(t_{\varepsilon},x_{\varepsilon})\rightarrow(t,x),K\times \Gamma\in\mathcal{K}\}.
\end{equation*}
The relaxed lower semilimit $U_{\downarrow}$ can be defined analogously by replacing $\limsup$ with $\liminf$ and $\sup$ with $\inf$.
\begin{definition}\label{defi}
Let $\hat{U}$ be the upper semicontinuous regularization of $U_{\uparrow}$, and $\check{U}$ be the lower semicontinuous regularization of $U_{\downarrow}$. That is,
$$\hat{U}(t,x)=\lim_{\varepsilon\rightarrow0}\sup_{(t_{\ast},x_{\ast})\in B_{\varepsilon}(t,x)}U_{\uparrow}(t_{\ast},x_{\ast}),\quad \check{U}(t,x)=\lim_{\varepsilon\rightarrow0}\inf_{(t_{\ast},x_{\ast})\in B_{\varepsilon}(t,x)}U_{\downarrow}(t_{\ast},x_{\ast}),$$
where $B_{\varepsilon}(t,x)$ is the open ball of radius $\varepsilon$ centered at $(t,x)$.
\end{definition}
\begin{remark}
Since $h$ is bounded, the viscosity solutions $\{U^{\varepsilon}\}_{\varepsilon>0}$ are equibounded. Therefore $\hat{U}$ is bounded upper semicontinuous, and $\check{U}$ is bounded lower semicontinuous.
\end{remark}

Let $\Lambda$ be some indexing set, and
\begin{equation*}
\hat{H}^{\lambda}(x,p): \mathbb{R}\times\mathbb{R}\rightarrow\mathbb{R},\quad\check{H}^{\lambda}(x,p): \mathbb{R}\times\mathbb{R}\rightarrow\mathbb{R},\quad\text{for}\,\,\lambda\in\Lambda.
\end{equation*}
Define the limiting operators $\hat{H}$ and $\check{H}$ on domains $\mathcal{D}_{+}$ and $\mathcal{D}_{-}$ respectively, as follows\,:
\begin{equation*}
\hat{H}f(x):=\hat{H}(x,\partial_{x}f(x)),\,\,\text{for}\,f\in\mathcal{D}_{+},\quad\text{and}\quad\check{H}f(x):=\check{H}(x,\partial_{x}f(x)), \,\,\text{for}\,f\in\mathcal{D}_{-},
\end{equation*}
where
\begin{equation*}
\hat{H}(x,p):=\inf_{\lambda\in\Lambda}\hat{H}^{\lambda}(x,p)\quad\text{and}\quad\check{H}(x,p):=\sup_{\lambda\in\Lambda}\check{H}^{\lambda}(x,p).
\end{equation*}
\begin{definition}\label{vis}
(Viscosity Subsolution and Supersolution). A bounded upper semicontinuous function $\hat{U}$ is said to be a viscosity subsolution of
\begin{equation}\label{subso}
\partial_{t}U(t,x)\leq \hat{H}(x, \partial_{x}U(t,x)),
\end{equation}
if for each
\begin{equation*}
\hat{\phi}(t,x)=\phi(t)+\hat{f}(x),\quad\text{where}\,\,\phi\in C^{1}(\mathbb{R}_{+}),\, \hat{f}\in\mathcal{D}_{+},
\end{equation*}
the function $\hat{U}-\hat{\phi}$ has the global maximum point $\hat{x}$, then
\begin{equation*}
\partial_{t}\hat{\phi}(t,x)-\hat{H}(x,\partial_{x}\hat{\phi}(t,x))\leq0.
\end{equation*}
Similarly, a bounded lower semicontinuous function $\check{U}$ is said to be a viscosity supersolution of
\begin{equation}\label{superso}
\partial_{t}U(t,x)\geq\check{H}(x, \partial_{x}U(t,x)),
\end{equation}
if for each
\begin{equation*}
\check{\phi}(t,x)=\phi(t)+\check{f}(x),\quad\text{where}\,\,\phi\in C^{1}(\mathbb{R}_{+}),\, \check{f}\in\mathcal{D}_{-},
\end{equation*}
the function $\check{U}-\check{\phi}$ has the global minimum point $\check{x}$, then
\begin{equation*}
\partial_{t}\check{\phi}(t,x)-\check{H}(x,\partial_{x}\check{\phi}(t,x))\geq0.
\end{equation*}
\begin{remark}
A viscosity solution is both viscosity subsolution and supersolution when the two solutions $\hat{U}$ and $\check{U}$
are equal. Now the inequalities \eqref{subso} and \eqref{superso} in Definition \ref{vis} should be equality since the viscosity solution is defined for an equation, not an inequality.
\end{remark}
\end{definition}
The following condition will be used.
\begin{condition}\label{cond1}
(Limsup and Liminf Convergence of Operators). For each $\lambda\in\Lambda$, $\hat{f}\in\mathcal{D}_{+}$ and $\check{f}\in\mathcal{D}_{-}$ , there exist $\hat{f}_{\varepsilon}\in D_{+}$ and $\check{f}_{\varepsilon}\in D_{-}$ (which may depend on $\lambda$) such that
\begin{itemize}
        \item[(1)] for each $c>0$, there exists $ K\times \Gamma\in\mathcal{K}$ satisfying
        \begin{align}\label{con11}
           &\{(x,y): H^{\varepsilon}\hat{f}_{\varepsilon}(x,y)\geq-c\}\cap\{(x,y): \hat{f}_{\varepsilon}(x,y)\leq c\}\subset K\times \Gamma\,;\\ \label{con12}
           &\{(x,y): H^{\varepsilon}\check{f}_{\varepsilon}(x,y)\leq c\}\cap\{(x,y): \check{f}_{\varepsilon}(x,y)\geq-c\}\subset K\times \Gamma.
        \end{align}
        \item[(2)] for each $K\times \Gamma\in\mathcal{K}$,
        \begin{align}\label{lisup}
          &\lim_{\varepsilon\rightarrow0}\sup_{(x,y)\in K\times \Gamma}|\hat{f}_{\varepsilon}(x,y)-\hat{f}(x)|=0\,;\\\label{liinf}
          &\lim_{\varepsilon\rightarrow0}\sup_{(x,y)\in K\times \Gamma}|\check{f}(x)-\check{f}_{\varepsilon}(x,y)|=0.
        \end{align}
        \item[(3)] whenever $(x_{\varepsilon},y_{\varepsilon})\in K\times \Gamma\in\mathcal{K}$ satisfies $x_{\varepsilon}\rightarrow x$,
                  \begin{align}\label{limsup}
                    &\limsup_{\varepsilon\rightarrow0}H^{\varepsilon}\hat{f}_{\varepsilon}(x_{\varepsilon},y_{\varepsilon})\leq \hat{H}^{\lambda}(x,\partial_{x}\hat{f}(x))\,;\\\label{liminf}
                    &\liminf_{\varepsilon\rightarrow0}H^{\varepsilon}\check{f}_{\varepsilon}(x_{\varepsilon},y_{\varepsilon})\geq \check{H}^{\lambda}(x,\partial_{x}\check{f}(x)).
                  \end{align}
        \end{itemize}
\end{condition}
In this case the following convergence results for $\{U^{\varepsilon}\}_{\varepsilon>0}$ as $\varepsilon\rightarrow0$ hold.
\begin{lemma}\label{subsup}
Suppose that $\sup\limits_{\varepsilon>0}||U^{\varepsilon}||_{\infty}<\infty$, i.e., the viscosity solutions $\{U^{\varepsilon}\}_{\varepsilon>0}$ of the partial integro-differential equation
$$
\partial_{t}U(t,x,y)=H^{\varepsilon}U(t,x,y),\quad U(0,x,y)=h(x)\in C_{b}(\mathbb{R})
$$
are uniformly bounded. Then, under Condition \ref{cond1}, $\hat{U}$ is a subsolution of
\eqref{subso} and $\check{U}$ is a supersolution of \eqref{superso} with the same initial values, where $\hat{U}$ and $\check{U}$ are given by Definition \ref{defi}.
\end{lemma}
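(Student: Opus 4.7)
The plan is to run a half-relaxed limits argument adapted to the non-local operators $H^\varepsilon$, using the perturbed test functions supplied by Condition \ref{cond1}. Since the proofs of the subsolution and supersolution statements are symmetric, I focus on the first.

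First I would verify the initial condition: because $U^\varepsilon(0,x,y) = h(x)$ is independent of both $\varepsilon$ and $y$, the definitions of $U_{\uparrow}$ and its upper semicontinuous regularization give $\hat{U}(0,x) = h(x)$, and likewise $\check{U}(0,x) = h(x)$. Next, take a test function $\hat{\phi}(t,x) = \phi(t) + \hat{f}(x)$ with $\phi \in C^1(\mathbb{R}_+)$ and $\hat{f} \in \mathcal{D}_+$, and suppose $\hat{U} - \hat{\phi}$ attains a global maximum at $(t_0, x_0)$ with $t_0 > 0$. By adding a small coercive perturbation of the form $\eta(|x-x_0|^2 + (t-t_0)^2)$ to $\hat{\phi}$ (which preserves membership in $\mathcal{D}_+$ since $\hat{f}$ is coercive) and then letting $\eta \downarrow 0$ at the end, I may assume the maximum is strict.

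Fix $\lambda \in \Lambda$ and choose the perturbed test function $\hat{f}_\varepsilon \in D_+$ provided by Condition \ref{cond1}, and set $\hat{\phi}_\varepsilon(t,x,y) := \phi(t) + \hat{f}_\varepsilon(x,y)$. The key step is to show that $U^\varepsilon - \hat{\phi}_\varepsilon$ attains its supremum on $[0,T]\times\mathbb{R}\times\mathbb{R}$ at some $(t_\varepsilon, x_\varepsilon, y_\varepsilon)$ lying in a fixed compact set, and that $(t_\varepsilon, x_\varepsilon) \to (t_0, x_0)$ along a subsequence as $\varepsilon \to 0$. Compactness of the maximizers is where Condition \ref{cond1}(1) does the work: uniform boundedness of $U^\varepsilon$ together with coercivity of $\hat{f}_\varepsilon$ forces the supremum to be attained (by viscosity doubling or by direct compactness), and on the maximizing sequence both $\hat{f}_\varepsilon(x_\varepsilon, y_\varepsilon)$ is bounded above and $H^\varepsilon \hat{f}_\varepsilon(x_\varepsilon, y_\varepsilon)$ is bounded below (the latter follows by comparing $U^\varepsilon$ at the max with a nearby point and using the equation), so by \eqref{con11} the maximizer lies in some $K\times\Gamma \in \mathcal{K}$ independent of $\varepsilon$. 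Convergence of $(t_\varepsilon, x_\varepsilon)$ to $(t_0, x_0)$ then comes from the standard maximum-point argument combined with the definition of the upper semi-limit and the uniform convergence $\hat{f}_\varepsilon \to \hat{f}$ from \eqref{lisup}; the strictness of the maximum pins down the limit uniquely.

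Once the maximizing point is trapped, the viscosity subsolution property of $U^\varepsilon$ for $\partial_t U = H^\varepsilon U$ applied with test function $\hat{\phi}_\varepsilon$ gives
\begin{equation*}
\partial_t \phi(t_\varepsilon) \leq H^\varepsilon \hat{f}_\varepsilon(x_\varepsilon, y_\varepsilon).
\end{equation*}
Passing to the limsup as $\varepsilon \to 0$, the left side tends to $\partial_t \phi(t_0)$ by continuity and the right side is controlled by \eqref{limsup}, yielding
\begin{equation*}
\partial_t \phi(t_0) \leq \hat{H}^\lambda\bigl(x_0, \partial_x \hat{f}(x_0)\bigr).
\end{equation*}
Since $\lambda \in \Lambda$ was arbitrary, taking the infimum produces $\partial_t \phi(t_0) \leq \hat{H}(x_0, \partial_x \hat{f}(x_0))$, which is precisely the subsolution inequality at $(t_0, x_0)$. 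The supersolution argument for $\check{U}$ is identical, with $\liminf$ replacing $\limsup$, $\check{f}_\varepsilon$ from \eqref{con12} and \eqref{liinf}, and \eqref{liminf} at the last step, and infimum replaced by supremum at the end.

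The main obstacle is the compactness step: justifying that the maximizers $(t_\varepsilon, x_\varepsilon, y_\varepsilon)$ of $U^\varepsilon - \hat{\phi}_\varepsilon$ exist, stay in a common compact set, and have $(t_\varepsilon, x_\varepsilon) \to (t_0, x_0)$. This is precisely what the coercivity/containment clauses \eqref{con11}, \eqref{con12} in Condition \ref{cond1} are designed to deliver, but some care is needed because $U^\varepsilon$ is only upper semicontinuous after regularization and the auxiliary variable $y$ must be localized purely through the structure of $\hat{f}_\varepsilon$ rather than through $U^\varepsilon$ itself. Once this localization is in place, the remainder of the argument is the standard Barles--Perthame/Feng--Kurtz convergence machinery transposed to the non-local setting.
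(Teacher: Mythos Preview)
Your proposal is correct and follows essentially the same route as the paper: both run the Barles--Perthame half-relaxed limits argument with the perturbed test functions $\hat{\phi}_\varepsilon(t,x,y)=\phi(t)+\hat{f}_\varepsilon(x,y)$, use the viscosity inequality at the $\varepsilon$-level to bound $H^\varepsilon\hat{f}_\varepsilon$ below and the maximum property plus uniform boundedness of $U^\varepsilon$ to bound $\hat{f}_\varepsilon$ above, invoke \eqref{con11} to trap $(x_\varepsilon,y_\varepsilon)$ in a fixed compact set, and then pass to the limit via \eqref{lisup} and \eqref{limsup} before taking $\inf_{\lambda\in\Lambda}$. The only real difference is that you explicitly insert a strict-maximum perturbation $\eta(|x-x_0|^2+(t-t_0)^2)$ to pin down the limit of the maximizers, whereas the paper simply asserts that the limit point must be ``the unique global maximizer''; your version is slightly more careful on this point but otherwise identical in substance.
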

\begin{proof}
Let $\hat{\phi}(t,x)=\phi(t)+\hat{f}(x)$ with $\phi\in C^{1}(\mathbb{R}_{+})$ and $\hat{f}\in\mathcal{D}_{+}$ fixed, and $\lambda\in\Lambda$ be given. Denote the global maximum of $\hat{U}-\hat{\phi}$ by $(\hat{t},\hat{x})$ with $\hat{t}>0$. Take $\hat{\phi}^{\varepsilon}(t,x,y)=\phi(t)+\hat{f}_{\varepsilon}(x,y)$,
where $\hat{f}_{\varepsilon}$ is the approximate of $\hat{f}$ in \eqref{lisup}, then $\hat{\phi}^{\varepsilon}$ has compact level sets. In addition,
combining with the boundness of $U^{\varepsilon}$, there exists $(t_{\varepsilon},x_{\varepsilon},y_{\varepsilon})\in[0,T]\times\mathbb{R}\times\mathbb{R}$ such that
\begin{equation}\label{in}
(U^{\varepsilon}-\hat{\phi}^{\varepsilon})(t_{\varepsilon},x_{\varepsilon},y_{\varepsilon})\geq(U^{\varepsilon}-\hat{\phi}^{\varepsilon})(t,x,y), \quad\text{for}~~(t,x,y)\in[0,T]\times\mathbb{R}\times\mathbb{R}
\end{equation}
and
\begin{equation}\label{ine}
\partial_{t}\phi(t_{\varepsilon})-H^{\varepsilon}\hat{f}_{\varepsilon}(x_{\varepsilon},y_{\varepsilon})\leq0,
\end{equation}
which implies
\begin{equation}\label{infe}
\inf_{\varepsilon>0}H^{\varepsilon}\hat{f}_{\varepsilon}(x_{\varepsilon},y_{\varepsilon})>-\infty.
\end{equation}
Take $(t_{1},x_{1},y_{1})\in[0,T]\times\mathbb{R}\times\mathbb{R}$ such that $\hat{\phi}(t_{1},x_{1})<\infty$, then
\begin{equation}\label{hi}
\hat{\phi}^{\varepsilon}(t_{1},x_{1},y_{1})=\phi(t_{1})+\hat{f}_{\varepsilon}(x_{1},y_{1})\rightarrow\phi(t_{1})+\hat{f}(x_{1})=\hat{\phi}(t_{1},x_{1})<\infty.
\end{equation}
Using \eqref{in} and \eqref{hi}, we get
\begin{equation*}
\hat{\phi}^{\varepsilon}(t_{\varepsilon},x_{\varepsilon},y_{\varepsilon})\leq2\sup_{\varepsilon>0}||U^{\varepsilon}||_{\infty}+\sup_{\varepsilon>0}\hat{\phi}^{\varepsilon}(t_{1},x_{1},y_{1})<\infty,
\end{equation*}
therefore
\begin{equation}\label{su}
\sup_{\varepsilon>0}\hat{f}_{\varepsilon}(x_{\varepsilon},y_{\varepsilon})<\infty.
\end{equation}
By \eqref{infe} and \eqref{su}, for $c>0$, we have $H^{\varepsilon}\hat{f}_{\varepsilon}(x_{\varepsilon},y_{\varepsilon})\geq-c$ and $\hat{f}_{\varepsilon}(x_{\varepsilon},y_{\varepsilon})\leq c$. Based on \eqref{con11} in Condition \ref{cond1}, there exists $K\times \Gamma\in\mathcal{K}$ such that $(x_{\varepsilon},y_{\varepsilon})\in K\times \Gamma$.

Since $K\times \Gamma$ is compact, there exists a subsequence of $\{(t_{\varepsilon},x_{\varepsilon},y_{\varepsilon})\}_{\varepsilon>0}$ (for simplify, we still use $\{(t_{\varepsilon},x_{\varepsilon},y_{\varepsilon})\}_{\varepsilon>0}$ to index it) and a $(t_{*}, x_{*})\in[0,T]\times\mathbb{R}$ such that $t_{\varepsilon}\rightarrow t_{*}$ and $x_{\varepsilon}\rightarrow x_{*}$. From the definition of $\hat{U}$, \eqref{lisup} and \eqref{in}, we have
\begin{equation*}
(\hat{U}-\hat{\varphi})(t_{*},x_{*})\geq(\hat{U}-\hat{\varphi})(t,x),
\end{equation*}
which indicates that $(t_{*}, x_{*})$ has to be the unique global maximizer $(\hat{t}, \hat{x})$ for $\hat{U}-\hat{\varphi}$ that appeared earlier.
In other words, there exists a subsequence of $\{(t_{\varepsilon},x_{\varepsilon},y_{\varepsilon})\}_{\varepsilon>0}$ such that $t_{\varepsilon}\rightarrow \hat{t}$ and $x_{\varepsilon}\rightarrow \hat{x}$. From \eqref{limsup} and \eqref{ine}, we obtain
\begin{equation*}
\partial_{t}\hat{\varphi}(\hat{t})\leq \hat{H}^{\lambda}(\hat{x},\partial_{x}\hat{f}(\hat{x})).
\end{equation*}
Take $\inf_{\lambda\in\Lambda}$ on both sides, we get
\begin{equation*}
\partial_{t}\hat{\phi}(\hat{t},\hat{x})-\inf_{\lambda\in\Lambda}\hat{H}^{\lambda}(\hat{x},\partial_{x}\hat{\phi}(\hat{t},\hat{x}))\leq0,
\end{equation*}
which shows that $\hat{U}$ is a subsolution of \eqref{subso}. Similarly, we can proof that $\check{U}$ is a supersolution of \eqref{superso} under Condition \ref{cond1}.
\end{proof}

\begin{lemma}\label{son}
Let $\hat{U}$ and $\check{U}$ be defined as in Definition \ref{defi}. If a comparison principle between subsolution of \eqref{subso} and supersolution of \eqref{superso} holds, that is, if every subsolution of \eqref{subso} is less than or equal to every supersolution of \eqref{superso}, then $\hat{U}=\check{U}$ and $U^{\varepsilon}(t,x,y)\rightarrow U^{0}(t,x)$ uniformly over compact subsets of $[0,T]\times\mathbb{R}\times\mathbb{R}$ as $\varepsilon\rightarrow0$, where $U^{0}:=\hat{U}=\check{U}$.
\end{lemma}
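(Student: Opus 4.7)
The plan is to combine three ingredients: the trivial ordering $\check{U}\le\hat{U}$ that comes from the definitions of the semi-limits and their regularisations, the sub/super-solution property delivered by Lemma \ref{subsup}, and the comparison principle that is taken as the hypothesis of the lemma. First, I would record the pointwise chain
\begin{equation*}
\check{U}(t,x)\;\le\;U_{\downarrow}(t,x)\;\le\;U_{\uparrow}(t,x)\;\le\;\hat{U}(t,x)\quad\text{on }[0,T]\times\mathbb{R},
\end{equation*}
where the middle inequality is $\liminf\le\limsup$ and the outer two follow because the lower (resp.\ upper) semicontinuous regularisation lies below (resp.\ above) the original function. At $t=0$ all four quantities equal $h(x)$, so the initial data match. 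Lemma \ref{subsup} then tells me that $\hat{U}$ is a viscosity subsolution of \eqref{subso} and $\check{U}$ is a viscosity supersolution of \eqref{superso} sharing the datum $h$, and the comparison hypothesis immediately forces the reverse inequality $\hat{U}\le\check{U}$. Setting $U^{0}:=\hat{U}=\check{U}=U_{\uparrow}=U_{\downarrow}$, the common function $U^{0}$ is simultaneously upper and lower semicontinuous, hence continuous on $[0,T]\times\mathbb{R}$.

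Next I would upgrade this pointwise equality to locally uniform convergence by a compactness argument. Since $h$ is bounded one may take $\sup_{\varepsilon>0}\|U^{\varepsilon}\|_{\infty}<\infty$ (the same assumption already invoked for Lemma \ref{subsup}). Suppose for contradiction that convergence fails on some compact $[0,T]\times K\times\Gamma$: there exist $\delta>0$, $\varepsilon_{n}\downarrow 0$, and points $(t_{n},x_{n},y_{n})$ in that compact with $|U^{\varepsilon_{n}}(t_{n},x_{n},y_{n})-U^{0}(t_{n},x_{n})|\ge\delta$. By compactness and boundedness I extract a subsequence along which $(t_{n},x_{n},y_{n})\to(t_{*},x_{*},y_{*})$ and $U^{\varepsilon_{n}}(t_{n},x_{n},y_{n})\to\ell\in\mathbb{R}$. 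The definitions of $U_{\uparrow}$ and $U_{\downarrow}$, combined with the semicontinuous regularisations that define $\hat{U}$ and $\check{U}$, then sandwich $\ell$ between $\check{U}(t_{*},x_{*})$ and $\hat{U}(t_{*},x_{*})$. Because $\hat{U}=\check{U}=U^{0}$, this forces $\ell=U^{0}(t_{*},x_{*})$, and continuity of $U^{0}$ gives $U^{0}(t_{n},x_{n})\to\ell$, contradicting the standing lower bound $\delta$.

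The delicate step I expect to be the main obstacle is the sandwich $\check{U}(t_{*},x_{*})\le\ell\le\hat{U}(t_{*},x_{*})$, because the definition of $U_{\uparrow}$ as written in the excerpt keeps $y$ fixed along the approximating $(t_{\varepsilon},x_{\varepsilon})$ sequence, whereas the contradictory sequence has $y_{n}$ varying as well. I plan to handle this by a diagonal extraction: since $y_{n}\to y_{*}$ and $\Gamma$ is compact, I can compare $U^{\varepsilon_{n}}(t_{n},x_{n},y_{n})$ with $U^{\varepsilon_{n}}(t_{n},x_{n},y_{*})$ on a neighborhood of $y_{*}$ and then absorb the $y$-discrepancy into the neighborhood supremum (resp.\ infimum) taken by the upper (resp.\ lower) semicontinuous regularisation $\hat{U}$ (resp.\ $\check{U}$). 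Once this technicality is in place, the argument above establishes the claimed uniform convergence on compact subsets of $[0,T]\times\mathbb{R}\times\mathbb{R}$, completing the proof of the lemma.
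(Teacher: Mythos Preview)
Your approach is the same as the paper's, which is a two-line argument: the comparison hypothesis gives $\hat{U}\le\check{U}$, the construction gives $\check{U}\le\hat{U}$, and uniform convergence is then asserted without further detail. You go well beyond this by spelling out the compactness--contradiction argument for uniform convergence, and you correctly isolate the one real subtlety: the paper's definition of $U_{\uparrow}$ holds $y$ fixed inside the $\limsup$, whereas your contradictory sequence has $y_{n}$ moving. Your proposed fix, however, does not work as written---comparing $U^{\varepsilon_{n}}(t_{n},x_{n},y_{n})$ with $U^{\varepsilon_{n}}(t_{n},x_{n},y_{*})$ presupposes equicontinuity of $U^{\varepsilon}$ in $y$, which is nowhere assumed, and ``absorbing the $y$-discrepancy into the regularisation'' cannot help because the regularisation in Definition~\ref{defi} is taken only in $(t,x)$. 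The clean resolution is to read the half-relaxed limits in the standard Barles--Perthame/Feng--Kurtz sense, i.e.\ with $(t_{\varepsilon},x_{\varepsilon},y_{\varepsilon})$ all allowed to vary over $[0,T]\times K\times\Gamma$ and only $(t_{\varepsilon},x_{\varepsilon})\to(t,x)$ required; under that convention the sandwich $\check{U}(t_{*},x_{*})\le\ell\le\hat{U}(t_{*},x_{*})$ is immediate from the definitions, and your contradiction argument goes through verbatim.
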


\begin{proof}
The comparison principle gives $\hat{U}\leq\check{U}$, while by construction we have $\check{U}\leq\hat{U}$. Then we obtain uniform convergence of $U^{\varepsilon}\rightarrow U^{0}:=\hat{U}=\check{U}$ over compact subsets of $[0,T]\times\mathbb{R}\times\mathbb{R}$.
\end{proof}

\subsection{Convergence of $H^{\varepsilon}$}\label{CH}

To verify that Condition \ref{cond1} holds for $H^{\varepsilon}$ defined by \eqref{Hed}, we need to identify the correct indexing set $\Lambda$,
the family of operators $\hat{H}^{\lambda}$ and $\check{H}^{\lambda}$, and the appropriate test functions $\hat{f}_{\varepsilon}$ and $\check{f}_{\varepsilon}$ for each given $\hat{f}$ and $\check{f}$, respectively.

Now we let
\begin{equation*}
\Lambda:=\{\lambda=(\zeta,\theta): \zeta\in C_{c}^{2}(\mathbb{R}),\,0<\theta<1\},
\end{equation*}
and define two domains
\begin{equation*}
\hat{D}_{\pm}:=\{f\in C^{2}(\mathbb{R}): f(x)=\varphi(x)\pm\beta\ln(1+x^{2}), \varphi\in C_{c}^{2}(\mathbb{R}),\beta>0\}.
\end{equation*}

\textbf{The supercritical case $(\alpha>2)$:} For each $\hat{f}\in\hat{D}_{+}$ and $\lambda=(\zeta,\theta)\in\Lambda$, we let $g(y):=\zeta(y)+\theta\xi(y)$, where $\xi$ is the Lyapunov function satisfying $(\textbf{C}_{5})-(\textbf{i})$, and define a sequence of test functions
\begin{equation}\label{ff}
\hat{f}_{\varepsilon}(x,y):=\hat{f}(x)+\varepsilon^{\alpha-1}g(y)=\hat{f}(x)+\varepsilon^{\alpha-1}\zeta(y)+\varepsilon^{\alpha-1}\theta\xi(y).
\end{equation}
Then we obtain
{\small\begin{align} \nonumber
H^{\varepsilon}\hat{f}_{\varepsilon}(x,y)
&=\varepsilon\left(b_{1}(x,y)\partial_{x}\hat{f}(x)+\sigma_{1}^{2}(x,y)\partial_{xx}^{2}\hat{f}(x)\right)+\sigma_{1}^{2}(x,y)\big(\partial_{x}\hat{f}(x)\big)^{2}\\ \nonumber
&~~~+\int_{\mathbb{R}\setminus \{0\}}\Big(e^{\frac{1}{\varepsilon}[\hat{f}(x+\varepsilon k_{1}(x,y,z))-\hat{f}(x)]}-1-k_{1}(x,y,z)\partial_{x}\hat{f}(x)\Big)\nu_{1}(dz)\\ \nonumber
&~~~+2\varepsilon^{\frac{\alpha}{2}-1}\rho\sigma_{1}(x,y)\sigma_{2}(x,y)\partial_{x}\hat{f}(x)\partial_{y}g(y)+\varepsilon^{2-\alpha}e^{-\varepsilon^{\alpha-2}g(y)}\mathcal{L}^{x}e^{\varepsilon^{\alpha-2}g(y)}\\[1mm]  \nonumber
&=\varepsilon\left(b_{1}(x,y)\partial_{x}\hat{f}(x)+\sigma_{1}^{2}(x,y)\partial_{xx}^{2}\hat{f}(x)\right)+\sigma_{1}^{2}(x,y)\big(\partial_{x}\hat{f}(x)\big)^{2}+b_{2}(x,y)\partial_{y}g(y)\\[1mm] \nonumber
&~~~+2\varepsilon^{\frac{\alpha}{2}-1}\rho\sigma_{1}(x,y)\sigma_{2}(x,y)\partial_{x}\hat{f}(x)\partial_{y}g(y)+\varepsilon^{\alpha-2}\sigma_{2}^{2}(x,y)\big(\partial_{y}g(y)\big)^{2}+\sigma_{2}^{2}(x,y)\partial_{yy}^{2}g(y)\\ \nonumber
&~~~+\int_{\mathbb{R}\setminus \{0\}}\Big(e^{\frac{1}{\varepsilon}[\hat{f}(x+\varepsilon k_{1}(x,y,z))-\hat{f}(x)]}-1-k_{1}(x,y,z)\partial_{x}\hat{f}(x)\Big)\nu_{1}(dz)\\ \label{alsup}
&~~~+\varepsilon^{2-\alpha}\int_{\mathbb{R}\setminus \{0\}}\Big(e^{\varepsilon^{\alpha-2}[g(y+k_{2}(x,y,z))-g(y)]}-1-\varepsilon^{\alpha-2}k_{2}(x,y,z)\partial_{y}g(y)\Big)\nu_{2}(dz),
\end{align}}
where $\mathcal{L}^{x}$ defined by \eqref{LLx}. Note that $||\partial_{x}\hat{f}||_{\infty}+||\partial^{2}_{xx}\hat{f}||_{\infty}<\infty$ and $||\partial_{y}g||_{\infty}+||\partial^{2}_{yy}g||_{\infty}<\infty$, by $(\textbf{C}_{5})-(\textbf{i})$ and \eqref{alsup}, there exist constants $C, \tilde{C}>0$ such that
\begin{equation*}
H^{\varepsilon}\hat{f}_{\varepsilon}(x,y)\leq V^{x,\partial_{x}\hat{f}(x)}(y)+b_{2}(x,y)\partial_{y}\zeta(y)+\sigma_{2}^{2}(x,y)\partial_{yy}^{2}\zeta(y)-C\theta\xi(y)+\tilde{C}\varepsilon.
\end{equation*}
We also have $\hat{f}_{\varepsilon}(x,y)\geq \hat{f}(x)-\varepsilon^{\alpha-1}||\zeta||_{\infty}$, then for each $c>0$, we can find $K\times\Gamma\in\mathcal{K}$, such that
\begin{equation*}
\{(x,y): H^{\varepsilon}\hat{f}_{\varepsilon}(x,y)\geq-c\}\cap\{(x,y): \hat{f}_{\varepsilon}(x,y)\leq c\}\subset K\times \Gamma,
\end{equation*}
which proves \eqref{con11} in Condition \ref{cond1}. By \eqref{ff} and $\alpha>1$,
$$
\hat{f}_{\varepsilon}(x,y)=\hat{f}(x)+\varepsilon^{\alpha-1}g(y)\rightarrow \hat{f}(x)\quad\text{as}~~\varepsilon\rightarrow0,
$$
we immediately obtain that
\eqref{lisup} holds. Furthermore, for $\lambda=(\zeta,\theta)\in\Lambda$, by taking
\begin{equation}\label{Hl1}
\hat{H}^{\lambda}(x,p)=\sup\limits_{y\in\mathbb{R}}\{\,V^{x,p}(y)+b_{2}(x,y)\partial_{y}\zeta(y)+\sigma_{2}^{2}(x,y)\partial_{yy}^{2}\zeta(y)-C\theta\xi(y)\,\},
\end{equation}
then for any sequence $(x_{\varepsilon},y_{\varepsilon})\in K\times \Gamma\in\mathcal{K}$ satisfying $x_{\varepsilon}\rightarrow x$, we have
\begin{equation*}
\limsup_{\varepsilon\rightarrow0}H^{\varepsilon}\hat{f}_{\varepsilon}(x_{\varepsilon},y_{\varepsilon})\leq \hat{H}^{\lambda}(x,\partial_{x}\hat{f}(x)),
\end{equation*}
which implies that \eqref{limsup} holds.

The rest of Condition \ref{cond1} can be verified similarly. Define a sequence of functions
\begin{equation*}
\check{f}_{\varepsilon}(x,y)=\check{f}(x)+\varepsilon^{\alpha-1}g(y)\quad\text{with}~~g(y):=\zeta(y)-\theta\xi(y),
\end{equation*}
for each $\check{f}\in\hat{D}_{-}$ and $\lambda=(\zeta,\theta)\in\Lambda$. Notice that \eqref{con12} and \eqref{liinf} hold by the same arguments as above. Take
\begin{equation}\label{Hm1}
\check{H}^{\lambda}(x,p)=\inf\limits_{y\in\mathbb{R}}\{\,V^{x,p}(y)+b_{2}(x,y)\partial_{y}\zeta(y)+\sigma_{2}^{2}(x,y)\partial_{yy}^{2}\zeta(y)+C\theta\xi(y)\,\},
\end{equation}
then for any sequence $(x_{\varepsilon},y_{\varepsilon})\in K\times \Gamma\in\mathcal{K}$ such that $x_{\varepsilon}\rightarrow x$, we get
\begin{equation*}
\liminf_{\varepsilon\rightarrow0}H^{\varepsilon}\check{f}_{\varepsilon}(x_{\varepsilon},y_{\varepsilon})\geq \check{H}^{\lambda}(x,\partial_{x}\check{f}(x)),
\end{equation*}
thus \eqref{liminf} holds.

\textbf{The critical case ($\alpha=2$):} For each $\hat{f}\in\hat{D}_{+}$ and $\lambda=(\zeta,\theta)\in\Lambda$, define a sequence of functions
\begin{equation}\label{f}
\hat{f}_{\varepsilon}(x,y)=\hat{f}(x)+\varepsilon g(y),
\end{equation}
where $g(y):=(1-\theta)\zeta(y)+\theta\xi(y)$, and $\xi$ is defined as before in $(\textbf{C}_{5})-(\textbf{ii})$. Then
{\small\begin{align} \nonumber
H^{\varepsilon}\hat{f}_{\varepsilon}(x,y)
&=e^{-g(y)}\mathcal{L}^{x,\partial_{x}\hat{f}(x)}e^{g(y)}+\varepsilon\left(\,b_{1}(x,y)\partial_{x}\hat{f}(x)+\sigma_{1}^{2}(x,y)\partial_{xx}^{2}\hat{f}(x)\,\right)+\sigma_{1}^{2}(x,y)\big(\partial_{x}\hat{f}(x)\big)^{2}\\ \nonumber
&~~~+\int_{\mathbb{R}\setminus \{0\}}\Big(e^{\frac{1}{\varepsilon}[\hat{f}(x+\varepsilon k_{1}(x,y,z))-\hat{f}(x)]}-1-k_{1}(x,y,z)\partial_{x}\hat{f}(x)\Big)\nu_{1}(dz)\\ \nonumber
&\leq(1-\theta)e^{-\zeta(y)}\mathcal{L}^{x,\partial_{x}\hat{f}(x)}e^{\zeta(y)}+\theta e^{-\xi(y)}\mathcal{L}^{x,\partial_{x}\hat{f}(x)}e^{\xi(y)}+ \varepsilon \left(\,b_{1}(x,y)\partial_{x}\hat{f}(x)+\sigma_{1}^{2}(x,y)\partial_{xx}^{2}\hat{f}(x)\,\right)\\ \label{alc}
&~~~+\sigma_{1}^{2}(x,y)\big(\partial_{x}\hat{f}(x)\big)^{2}+\int_{\mathbb{R}\setminus \{0\}}\big(e^{\frac{1}{\varepsilon}[\hat{f}(x+\varepsilon k_{1}(x,y,z))-\hat{f}(x)]}-1-k_{1}(x,y,z)\partial_{x}\hat{f}(x)\big)\nu_{1}(dz),
\end{align}}
where $\mathcal{L}^{x,\partial_{x}\hat{f}(x)}$ as in \eqref{LLp} with $p=\partial_{x}\hat{f}(x)$.
By the choice of domain $\hat{D}_{+}$, then $\hat{f}\in\hat{D}_{+}$ has compact level sets in $\mathbb{R}$ and $||\partial_{x}\hat{f}||_{\infty}+||\partial^{2}_{xx}\hat{f}||_{\infty}<\infty$. Based on $(\textbf{C}_{5})-(\textbf{ii})$ and $\zeta\in C_{c}^{2}(\mathbb{R})$, there exists $C>0$ such that
\begin{equation*}
H^{\varepsilon}\hat{f}_{\varepsilon}(x,y)\leq V^{x,\partial_{x}\hat{f}(x)}(y)+(1-\theta)e^{-\zeta(y)}\mathcal{L}^{x,\partial_{x}\hat{f}(x)}e^{\zeta(y)}+\theta e^{-\xi(y)}\mathcal{L}^{x,\partial_{x}\hat{f}(x)}e^{\xi(y)}+C\varepsilon.
\end{equation*}
 and $\hat{f}_{\varepsilon}(x,y)\geq f(x)-\varepsilon||\zeta||_{\infty}$. For each $c>0$, we can find $K\times\Gamma\in\mathcal{K}$, such that
\begin{equation*}
\{(x,y): H^{\varepsilon}\hat{f}_{\varepsilon}(x,y)\geq-c\}\cap\{(x,y): \hat{f}_{\varepsilon}(x,y)\leq c\}\subset K\times \Gamma,
\end{equation*}
which proves \eqref{con11} in Condition \ref{cond1}. From the definition of $\hat{f}_{\varepsilon}(x,y)$ in \eqref{f}, we immediately obtain that
\eqref{lisup} holds. Moreover, we define the family of operators $\hat{H}^{\lambda}(x,p)$ by
\begin{equation}\label{H02}
\hat{H}^{\lambda}(x,p)=\sup\limits_{y\in\mathbb{R}}\{\,V^{x,p}(y)+(1-\theta)e^{-\zeta(y)}\mathcal{L}^{x,p} e^{\zeta(y)}+\theta e^{-\xi(y)}\mathcal{L}^{x,p}e^{\xi(y)}\,\},
\end{equation}
where $\mathcal{L}^{x,p}$ and $V^{x,p}(y)$ are defined by \eqref{LLp} and \eqref{v} respectively. By \eqref{alc}, for any sequence $(x_{\varepsilon},y_{\varepsilon})\in K\times \Gamma\in\mathcal{K}$ satisfying $x_{\varepsilon}\rightarrow x$, we have
\begin{equation*}
\limsup_{\varepsilon\rightarrow0}H^{\varepsilon}\hat{f}_{\varepsilon}(x_{\varepsilon},y_{\varepsilon})\leq \hat{H}^{\lambda}(x,\partial_{x}\hat{f}(x)),
\end{equation*}
which implies that \eqref{limsup} holds.

The proof of the rest of Condition \ref{cond1} follows by straightforward modifications. Define a sequence of functions
\begin{equation*}
\check{f}_{\varepsilon}(x,y)=\check{f}(x)+\varepsilon g(y)\quad\text{with}~~g(y):=(1+\theta)\zeta(y)-\theta\xi(y),
\end{equation*}
for each $\check{f}\in\hat{D}_{-}$ and $\lambda=(\zeta,\theta)\in\Lambda$. Please remark that \eqref{con12} and \eqref{liinf} hold by the same arguments as above. Take
\begin{equation}\label{H22}
\check{H}^{\lambda}(x,p)=\inf\limits_{y\in\mathbb{R}}\{\,V^{x,p}(y)+(1+\theta)e^{-\zeta(y)}\mathcal{L}^{x,p} e^{\zeta(y)}-\theta e^{-\xi(y)}\mathcal{L}^{x,p}e^{\xi(y)}\,\},
\end{equation}
then for any sequence $(x_{\varepsilon},y_{\varepsilon})\in K\times \Gamma\in\mathcal{K}$ such that $x_{\varepsilon}\rightarrow x$, we obtain
\begin{equation*}
\liminf_{\varepsilon\rightarrow0}H^{\varepsilon}\check{f}_{\varepsilon}(x_{\varepsilon},y_{\varepsilon})\geq \check{H}^{\lambda}(x,\partial_{x}\check{f}(x)),
\end{equation*}
Hence \eqref{liminf} holds.

\textbf{The subcritical case ($\alpha<2$):} For each $\hat{f}\in\hat{D}_{+}$ and $\lambda=(\zeta,\theta)\in\Lambda$, define a sequence of functions
\begin{equation}\label{fff}
\hat{f}_{\varepsilon}(x,y)=\hat{f}(x)+\varepsilon^{\frac{\alpha}{2}}g(y),
\end{equation}
where $g(y):=(1-\theta)\zeta(y)+\theta\xi(y)$, and $\xi$ is the Lyapunov function on $\mathbb{R}$ satisfying $(\textbf{C}_{5})-(\textbf{iii})$.
Then
{\small\begin{align} \nonumber
& H^{\varepsilon}\hat{f}_{\varepsilon}(x,y)\\
&=\varepsilon^{2-\alpha}e^{-\varepsilon^{\frac{\alpha}{2}-1}g(y)}\mathcal{L}^{x}e^{\varepsilon^{\frac{\alpha}{2}-1}g(y)}+\varepsilon\left(b_{1}(x,y)\partial_{x}\hat{f}(x)+\sigma_{1}^{2}(x,y)\partial_{xx}^{2}\hat{f}(x)\right)+\sigma_{1}^{2}(x,y)\big(\partial_{x}\hat{f}(x)\big)^{2}\\ \nonumber
&~~~+2\rho\sigma_{1}(x,y)\sigma_{2}(x,y)\partial_{x}\hat{f}(x)\partial_{y}g(y)+\int_{\mathbb{R}\setminus \{0\}}\Big(e^{\frac{1}{\varepsilon}[\hat{f}(x+\varepsilon k_{1}(x,y,z))-\hat{f}(x)]}-1-k_{1}(x,y,z)\partial_{x}\hat{f}(x)\Big)\nu_{1}(dz)\\ \nonumber
&=\varepsilon\left(b_{1}(x,y)\partial_{x}\hat{f}(x)+\sigma_{1}^{2}(x,y)\partial_{xx}^{2}\hat{f}(x)\right)+\sigma_{1}^{2}(x,y)\big(\partial_{x}\hat{f}(x)\big)^{2}+\varepsilon^{1-\frac{\alpha}{2}}b_{2}(x,y)\partial_{y}g(y)\\[1mm] \nonumber
&~~~+2\rho\sigma_{1}(x,y)\sigma_{2}(x,y)\partial_{x}\hat{f}(x)\partial_{y}g(y)+\sigma_{2}^{2}(x,y)\big(\partial_{y}g(y)\big)^{2}+\varepsilon^{1-\frac{\alpha}{2}}\sigma_{2}^{2}(x,y)\partial_{yy}^{2}g(y)\\ \nonumber
&~~~+\int_{\mathbb{R}\setminus \{0\}}\Big(e^{\frac{1}{\varepsilon}[\hat{f}(x+\varepsilon k_{1}(x,y,z))-\hat{f}(x)]}-1-k_{1}(x,y,z)\partial_{x}\hat{f}(x)\Big)\nu_{1}(dz)\\ \label{alsub}
&~~~+\varepsilon^{2-\alpha}\int_{\mathbb{R}\setminus \{0\}}\Big(e^{\varepsilon^{\frac{\alpha}{2}-1}[g(y+k_{2}(x,y,z))-g(y)]}-1-\varepsilon^{\frac{\alpha}{2}-1}k_{2}(x,y,z)\partial_{y}g(y)\Big)\nu_{2}(dz),
\end{align}}
where $\mathcal{L}^{x}$ as in \eqref{LLx}. Note that $||\partial_{x}\hat{f}||_{\infty}+||\partial^{2}_{xx}\hat{f}||_{\infty}<\infty$ and $||\partial_{y}g||_{\infty}+||\partial^{2}_{yy}g||_{\infty}<\infty$, by $(\textbf{C}_{5})-(\textbf{iii})$ and \eqref{alsub}, there exists constant $C>0$ such that
\begin{align*} \nonumber
H^{\varepsilon}\hat{f}_{\varepsilon}(x,y)
&\leq V^{x,\partial_{x}\hat{f}(x)}(y)+(1-\theta)[\,p(x,y)\partial_{x}\hat{f}(x)\partial_{y}\zeta(y)+q(x,y)\big(\partial_{y}\zeta(y)\big)^{2}\,]
\\ \nonumber
&~~~~+\theta[\,p(x,y)\partial_{x}\hat{f}(x)\partial_{y}\xi(y)+q(x,y)\big(\partial_{y}\xi(y)\big)^{2}\,]+C\varepsilon,
\end{align*}
where
\begin{equation*}
p(x,y)=2\rho\sigma_{1}(x,y)\sigma_{2}(x,y)\quad\text{and}\quad q(x,y)=\sigma_{2}^{2}(x,y)+\frac{1}{2}\int_{\mathbb{R}\setminus\{0\}}k_{2}^{2}(x,y,z)\nu_{2}(dz).
\end{equation*}
We also have $\hat{f}_{\varepsilon}(x,y)\geq \hat{f}(x)-\varepsilon^{\frac{\alpha}{2}}||\zeta||_{\infty}$, then for each $c>0$, we can find $K\times\Gamma\in\mathcal{K}$, such that
\begin{equation*}
\{(x,y): H^{\varepsilon}\hat{f}_{\varepsilon}(x,y)\geq-c\}\cap\{(x,y): \hat{f}_{\varepsilon}(x,y)\leq c\}\subset K\times \Gamma,
\end{equation*}
which proves \eqref{con11} in Condition \ref{cond1}. By the definition of $\hat{f}_{\varepsilon}(x,y)$ in \eqref{fff}, we immediately obtain that
\eqref{lisup} holds. Furthermore, for $\lambda=(\zeta,\theta)\in\Lambda$, by taking
\begin{align} \nonumber
\hat{H}^{\lambda}(x,p)
&=\sup\limits_{y\in\mathbb{R}}\{\,V^{x,p}(y)+(1-\theta)[\,p(x,y)\partial_{y}\zeta(y)p+q(x,y)\big(\partial_{y}\zeta(y)\big)^{2}\,]
\\ \label{Hl3}
&~~~~+\theta[\,p(x,y)\partial_{y}\xi(y)p+q(x,y)\big(\partial_{y}\xi(y)\big)^{2}\,]\,\},
\end{align}
then for any sequence $(x_{\varepsilon},y_{\varepsilon})\in K\times \Gamma\in\mathcal{K}$ satisfying $x_{\varepsilon}\rightarrow x$, we have
\begin{equation*}
\limsup_{\varepsilon\rightarrow0}H^{\varepsilon}\hat{f}_{\varepsilon}(x_{\varepsilon},y_{\varepsilon})\leq \hat{H}^{\lambda}(x,\partial_{x}\hat{f}(x)),
\end{equation*}
which implies that \eqref{limsup} holds.

The rest of Condition \ref{cond1} can be treated in a similar manner. Define a sequence of functions
\begin{equation*}
\check{f}_{\varepsilon}(x,y)=\check{f}(x)+\varepsilon^{\frac{\alpha}{2}}g(y)\quad\text{with}~~g(y):=(1+\theta)\zeta(y)-\theta\xi(y),
\end{equation*}
for each $\check{f}\in\hat{D}_{-}$ and $\lambda=(\zeta,\theta)\in\Lambda$. Please note that \eqref{con12} and \eqref{liinf} hold by the same arguments as above. Take
\begin{align} \nonumber
\check{H}^{\lambda}(x,p)
&=\inf\limits_{y\in\mathbb{R}}\{\,V^{x,p}(y)+(1+\theta)[\,p(x,y)\partial_{y}\zeta(y)p+q(x,y)\big(\partial_{y}\zeta(y)\big)^{2}\,]
\\ \label{Hm3}
&~~~~-\theta[\,p(x,y)\partial_{y}\xi(y)p+q(x,y)\big(\partial_{y}\xi(y)\big)^{2}\,]\,\},
\end{align}
then for any sequence $(x_{\varepsilon},y_{\varepsilon})\in K\times \Gamma\in\mathcal{K}$ such that $x_{\varepsilon}\rightarrow x$, we get
\begin{equation*}
\liminf_{\varepsilon\rightarrow0}H^{\varepsilon}\check{f}_{\varepsilon}(x_{\varepsilon},y_{\varepsilon})\geq \check{H}^{\lambda}(x,\partial_{x}\check{f}(x)),
\end{equation*}
thus \eqref{liminf} holds.

\subsection{Comparison theorem for $H^{0}$}\label{CT}
The comparison theorem among viscosity subsolution and supersolution of $\partial_{t}U(t,x)=H^{0}(x,\partial_{x}U(t,x))$ in \eqref{sU0} will be the crucial tool for proving that the convergence of $\{U^{\varepsilon}\}_{\varepsilon>0}$ described by \eqref{sU} is not only in the weak sense of semilimits but in fact uniform, and the limit is unique.

We need to derive that the comparison principle holds for $H^{0}$, which is one of the key conditions for Lemma \ref{con}.
And employ it afterwards, equation \eqref{sU0} has a unique viscosity solution  for given initial values $U(0,\cdot)$ and $T>0$.
\begin{theorem}\label{unf}
Let $\hat{U}$ and $\check{U}$ be, respectively, a bounded upper semicontinuous viscosity subsolution and a bounded lower semicontinuous viscosity supersolution to $\partial_{t}U(t,x)=H^{0}(x,\partial_{x}U(t,x))$ such that $\hat{U}(0,x)\leq \check{U}(0,x)$ for all $x\in\mathbb{R}$, and $H^{0}$ is uniformly continuous on compact sets. Then $\hat{U}(t,x)\leq \check{U}(t,x)$ for all $(t,x)\in[0,T]\times\mathbb{R}$.
\end{theorem}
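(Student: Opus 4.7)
The plan is to argue by contradiction using Ishii's doubling-of-variables technique, carefully adapted to the restricted class of admissible test functions $\mathcal{D}_{\pm}$ from Definition \ref{vis}. Suppose, toward a contradiction, that $M:=\sup_{[0,T]\times\mathbb{R}}(\hat{U}-\check{U})>0$. Replacing $\hat{U}$ by $\hat{U}-\eta t$ for a small $\eta>0$ and subtracting respectively adding a coercive term $\sigma\ln(1+x^{2})$ from $\hat{U}$ and $\check{U}$ ensures, for $\eta,\sigma>0$ sufficiently small, that the modified supremum remains strictly positive while any maximizer is pushed away from $t=0$ and from spatial infinity. The coercive term also prepares test functions to lie in $\mathcal{D}_{\pm}$ since functions in those domains are of the prescribed form $\varphi\pm\beta\ln(1+x^{2})$.

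For $\tau>0$ small, introduce the doubled functional
\[
\Phi_{\tau}(t,s,x,y):=\hat{U}(t,x)-\check{U}(s,y)-\frac{(x-y)^{2}+(t-s)^{2}}{2\tau}-\eta(t+s)-\sigma\bigl[\ln(1+x^{2})+\ln(1+y^{2})\bigr].
\]
Upper/lower semicontinuity combined with the logarithmic coercivity guarantees a global maximum at some $(t_{\tau},s_{\tau},x_{\tau},y_{\tau})$. Standard Crandall--Ishii--Lions estimates then give that $(t_{\tau},s_{\tau},x_{\tau},y_{\tau})$ remains in a compact set uniformly in $\tau$, that $\tau^{-1}[(x_{\tau}-y_{\tau})^{2}+(t_{\tau}-s_{\tau})^{2}]\to 0$, that every cluster point realizes the perturbed supremum and therefore satisfies $\hat{U}(t_{\ast},x_{\ast})-\check{U}(t_{\ast},x_{\ast})>0$, and that $t_{\tau},s_{\tau}>0$ for small $\tau$ by virtue of the initial comparison and the $\eta t$ penalty.

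Now apply the viscosity inequalities. Fixing $(s_{\tau},y_{\tau})$, the map $(t,x)\mapsto\hat{U}(t,x)-\hat{\phi}(t,x)$ attains its global maximum at $(t_{\tau},x_{\tau})$, where $\hat{\phi}(t,x)=\phi^{+}(t)+\hat{f}_{\tau}(x)$ with $\phi^{+}(t):=(t-s_{\tau})^{2}/(2\tau)+\eta t$ and $\hat{f}_{\tau}(x):=\chi(x)(x-y_{\tau})^{2}/(2\tau)+\sigma\ln(1+x^{2})$, and $\chi\in C_{c}^{2}(\mathbb{R})$ is a smooth cut-off identically one on a neighbourhood of $x_{\tau}$ large enough that the maximum is unaffected; then $\hat{f}_{\tau}\in\mathcal{D}_{+}$ as required. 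A symmetric construction furnishes an admissible $\check{\phi}\in\mathcal{D}_{-}$ for $\check{U}$ at $(s_{\tau},y_{\tau})$. Subtracting the two inequalities yields
\[
2\eta\leq H^{0}\!\left(x_{\tau},\tfrac{x_{\tau}-y_{\tau}}{\tau}+\tfrac{2\sigma x_{\tau}}{1+x_{\tau}^{2}}\right)-H^{0}\!\left(y_{\tau},\tfrac{x_{\tau}-y_{\tau}}{\tau}-\tfrac{2\sigma y_{\tau}}{1+y_{\tau}^{2}}\right).
\]
Since $x_{\tau},y_{\tau}$ stay in a compact set and $x_{\tau}-y_{\tau}\to 0$, the uniform continuity of $H^{0}$ on compact sets drives the right-hand side to zero as $\tau\to 0$ and then $\sigma\to 0$, contradicting $2\eta>0$. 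This yields $\hat{U}\leq\check{U}$ on $[0,T]\times\mathbb{R}$.

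The main obstacle is twofold. First, one must legitimize the quadratic penalty $(x-y_{\tau})^{2}/(2\tau)$ as an admissible spatial test function in $\mathcal{D}_{+}$: strictly, $\mathcal{D}_{+}$ consists of $\varphi+\beta\ln(1+x^{2})$ with $\varphi\in C_{c}^{2}(\mathbb{R})$, so the quadratic must be truncated by a compactly supported $\chi$, and one must verify the truncation radius can be fixed independently of $\tau$; this rests on the a priori bound on $x_{\tau}$ coming jointly from the logarithmic coercive term and the uniform boundedness of $\hat{U}$ and $\check{U}$. Second, to pass to the limit one must control the penalty gradient $(x_{\tau}-y_{\tau})/\tau$ uniformly in $\tau$, which is not automatic from the soft estimate $(x_{\tau}-y_{\tau})^{2}/\tau\to 0$; this requires exploiting the structural form of $H^{0}$ given in \eqref{super0U0}, \eqref{hh0}, and \eqref{hHV} to obtain a modulus-of-continuity estimate of the standard first-order Hamilton--Jacobi type, $|H^{0}(x,p)-H^{0}(y,p)|\leq\omega_{R}((1+|p|)|x-y|)$ on each compact set of $p$, which strengthens the hypothesis that $H^{0}$ is uniformly continuous on compact sets into the form actually needed by the doubling argument.
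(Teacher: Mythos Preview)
Your proposal follows essentially the same doubling-of-variables argument as the paper: add a time penalty and logarithmic spatial coercivity, introduce the quadratic penalty in $(t-s,x-y)$, apply the sub- and supersolution inequalities at the maximizer, and subtract to obtain a contradiction via the continuity of $H^{0}$. You are in fact more scrupulous than the paper on two technical points it glosses over---the paper neither truncates the quadratic penalty to fit the restricted test-function class $\mathcal{D}_{\pm}$ nor justifies its parenthetical assertion that $(x'_{\varepsilon}-y'_{\varepsilon})/\varepsilon$ stays bounded (the estimate $|x'_{\varepsilon}-y'_{\varepsilon}|^{2}\leq C_{1}\varepsilon$ only yields $O(\varepsilon^{-1/2})$), whereas you correctly flag both as obstacles requiring additional care.
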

\begin{proof}
By contradiction, we assume that there exists $x\in\mathbb{R}$ such that
\begin{equation}\label{contra}
\sup_{t\in[0,T]}\{\hat{U}(t,x)-\check{U}(t,x)\}>C>0.
\end{equation}
For $\gamma>0$ , define
{\small\begin{equation}\label{Phi}
Q(t,s,x,y)=\hat{U}(t,x)-\check{U}(s,y)-\frac{|t-s|^{2}+|x-y|^{2}}{2\varepsilon}+\gamma\left(\ln(1+|x|^{2})+\ln(1+|y|^{2})\right)-Cs.
\end{equation}}
For $\varepsilon>0$ small enough, $Q$ has a maximum point that we denote with $(t'_{\varepsilon},s'_{\varepsilon},x'_{\varepsilon},y'_{\varepsilon})$. Since $\hat{U}$, $\check{U}$ are bounded, there exists $R_{\gamma}>0$ such that $|x'_{\varepsilon}|, |y'_{\varepsilon}|\leq R_{\gamma}$.

If either $t'_{\varepsilon}=0$ or $s'_{\varepsilon}=0$, we get a contradiction with \eqref{contra} by means of the inequality $\hat{U}(0,x)\leq \check{U}(0,x)$ for all $x\in\mathbb{R}$. So we consider the case $(t'_{\varepsilon},s'_{\varepsilon},x'_{\varepsilon},y'_{\varepsilon})\in(0,T]\times(0,T]\times\mathbb{R}\times\mathbb{R}$.
Taking $(t,s,x,y)=(t'_{\varepsilon},t'_{\varepsilon},x'_{\varepsilon},x'_{\varepsilon})$ in \eqref{Phi}, we get
\begin{equation*}
Q(t'_{\varepsilon},t'_{\varepsilon},x'_{\varepsilon},x'_{\varepsilon})=\hat{U}(t'_{\varepsilon},x'_{\varepsilon})-\check{U}(t'_{\varepsilon},x'_{\varepsilon})+2\gamma\ln(1+|x'_{\varepsilon}|^{2}) -Ct'_{\varepsilon}.
\end{equation*}
Similarly, we have
\begin{equation*}
Q(s'_{\varepsilon},s'_{\varepsilon},y'_{\varepsilon},y'_{\varepsilon})=\hat{U}(s'_{\varepsilon},y'_{\varepsilon})-\check{U}(s'_{\varepsilon},y'_{\varepsilon})+2\gamma\ln(1+|y'_{\varepsilon}|^{2}) -Cs'_{\varepsilon}.
\end{equation*}
Utilizing
\begin{equation*}
Q(t'_{\varepsilon},t'_{\varepsilon},x'_{\varepsilon},x'_{\varepsilon})+Q(s'_{\varepsilon},s'_{\varepsilon},y'_{\varepsilon},y'_{\varepsilon})\leq2Q(t'_{\varepsilon},s'_{\varepsilon},x'_{\varepsilon},y'_{\varepsilon}),
\end{equation*}
we obtain
{\small\begin{align*}
&\hat{U}(t'_{\varepsilon},x'_{\varepsilon})+\hat{U}(s'_{\varepsilon},y'_{\varepsilon})-\check{U}(t'_{\varepsilon},x'_{\varepsilon})-\check{U}(s'_{\varepsilon},y'_{\varepsilon})+2\gamma\left(\ln(1+|x'_{\varepsilon}|^{2})+\ln(1+|y'_{\varepsilon}|^{2})\right)-C(t'_{\varepsilon}+s'_{\varepsilon})\\
&\leq2\hat{U}(t'_{\varepsilon},x'_{\varepsilon})-2\check{U}(s'_{\varepsilon},y'_{\varepsilon})-\frac{|t'_{\varepsilon}-s'_{\varepsilon}|^{2}+|x'_{\varepsilon}-y'_{\varepsilon}|^{2}}{\varepsilon}+2\gamma\left(\ln(1+|x'_{\varepsilon}|^{2})+\ln(1+|y'_{\varepsilon}|^{2})\right)-2Cs'_{\varepsilon}.
\end{align*}}
Then
\begin{align*}
\frac{|t'_{\varepsilon}-s'_{\varepsilon}|^{2}+|x'_{\varepsilon}-y'_{\varepsilon}|^{2}}{\varepsilon}
&\leq \hat{U}(t'_{\varepsilon},x'_{\varepsilon})-\hat{U}(s'_{\varepsilon},y'_{\varepsilon})+\check{U}(t'_{\varepsilon},x'_{\varepsilon})-\check{U}(s'_{\varepsilon},y'_{\varepsilon})+C(t'_{\varepsilon}-s'_{\varepsilon})\\
&\leq 2||\hat{U}||_{\infty}+2||\check{U}||_{\infty}+2CT:=C_{1}<\infty,
\end{align*}
which implies
\begin{equation*}
|t'_{\varepsilon}-s'_{\varepsilon}|^{2}+|x'_{\varepsilon}-y'_{\varepsilon}|^{2}\leq C_{1} \varepsilon .
\end{equation*}
Hence $|t'_{\varepsilon}-s'_{\varepsilon}|, |x'_{\varepsilon}-y'_{\varepsilon}|\rightarrow0$ as $\varepsilon\rightarrow0$.

Let
\begin{equation*}
Q_{1}(t,x):=\check{U}(s'_{\varepsilon},y'_{\varepsilon})+\frac{|t-s'_{\varepsilon}|^{2}+|x-y'_{\varepsilon}|^{2}}{2\varepsilon}-\gamma\left(\ln(1+|x|^{2})+\ln(1+|y'_{\varepsilon}|^{2})\right)+Cs'_{\varepsilon}
\end{equation*}
and
\begin{equation*}
Q_{2}(s,y):=\hat{U}(t'_{\varepsilon},x'_{\varepsilon})-\frac{|t'_{\varepsilon}-s|^{2}+|x'_{\varepsilon}-y|^{2}}{2\varepsilon}+\gamma\left(\ln(1+|x'_{\varepsilon}|^{2})+\ln(1+|y|^{2})\right)-Cs.
\end{equation*}
From the fact that $(t'_{\varepsilon},s'_{\varepsilon},x'_{\varepsilon},y'_{\varepsilon})$ is the maximum point of $Q$, we get that $Q(t,s'_{\varepsilon},x,y'_{\varepsilon}):=\hat{U}(t,x)-Q_{1}(t,x)$ has a maximum point $(t'_{\varepsilon},x'_{\varepsilon})$, and $-Q(t'_{\varepsilon},s,x'_{\varepsilon},y):=\check{U}(s,y)-Q_{2}(s,y)$ has a minimum point $(s'_{\varepsilon},y'_{\varepsilon})$. Using the fact that $\hat{U}$ is a subsolution, we get
\begin{equation}\label{leq}
\frac{t'_{\varepsilon}-s'_{\varepsilon}}{\varepsilon}\leq H^{0}\left(x'_{\varepsilon},\frac{x'_{\varepsilon}-y'_{\varepsilon}}{\varepsilon}-\frac{2\gamma x'_{\varepsilon}}{1+|x'_{\varepsilon}|^{2}}\right).
\end{equation}
Since $\check{U}$ is a supersolution, we obtain
\begin{equation}\label{geq}
\frac{t'_{\varepsilon}-s'_{\varepsilon}}{\varepsilon}-C\geq H^{0}\left(y'_{\varepsilon},\frac{x'_{\varepsilon}-y'_{\varepsilon}}{\varepsilon}+\frac{2\gamma y'_{\varepsilon}}{1+|y'_{\varepsilon}|^{2}}\right).
\end{equation}
Subtracting \eqref{geq} from \eqref{leq}, we have
\begin{equation}\label{fin}
C\leq H^{0}\left(x'_{\varepsilon},\frac{x'_{\varepsilon}-y'_{\varepsilon}}{\varepsilon}-\frac{2\gamma x'_{\varepsilon}}{1+|x'_{\varepsilon}|^{2}}\right)-H^{0}\left(y'_{\varepsilon},\frac{x'_{\varepsilon}-y'_{\varepsilon}}{\varepsilon}+\frac{2\gamma y'_{\varepsilon}}{1+|y'_{\varepsilon}|^{2}}\right).
\end{equation}
Since $H^{0}$ is uniformly continuous over compact sets, and $|t'_{\varepsilon}-s'_{\varepsilon}|, |x'_{\varepsilon}-y'_{\varepsilon}|\rightarrow0$ as $\varepsilon\rightarrow0$, the right-hand side of \eqref{fin} goes to $0$ as $\varepsilon$, $\gamma$ go to $0$.
Notice that the terms $\frac{x'_{\varepsilon}-y'_{\varepsilon}}{\varepsilon}$, $\frac{2\gamma x'_{\varepsilon}}{1+|x'_{\varepsilon}|^{2}}$ and $\frac{2\gamma y'_{\varepsilon}}{1+|y'_{\varepsilon}|^{2}}$ are bounded and that $|x'_{\varepsilon}|, |y'_{\varepsilon}|\leq R_{\gamma}$ for each $\gamma$.
Taking $\varepsilon\rightarrow0$ and then $\gamma\rightarrow0$, we get $C\leq0$, reaching a contradiction.
\end{proof}

\begin{remark}
 When $H^{0}$ can be explicitly calculated, the continuity can be directly verified. Otherwise we may need to prove that the expression on the right-hand side of \eqref{fin} is non-positive by using the specifics for the case at hand.
\end{remark}

\begin{remark}
Here $H^{0}$ is uniformly continuous on compact sets, which is a sufficient condition for the comparison principle for \eqref{sU0} to hold; see \cite[Corollary 4]{KP17}. The comparison principle also holds if the coefficients $b_{2}(x,y)$, $\sigma_{2}(x,y)$, $k_{2}(x,y,z)$ are independent on $x$, the coefficient $\sigma_{1}(x,y)$ is bounded with respect to $y$ for each $x$, and the non-correlation condition $\rho=0$ holds.
\end{remark}

\subsection{The convergence result}\label{CR}

Next we will show that our problem satisfies the comparison principle in the condition of Lemma \ref{son}, this is, every subsolution of
\begin{equation*}
\partial_{t}U(t,x)\leq \hat{H}(x,p):=\inf_{\lambda\in\Lambda}\hat{H}^{\lambda}(x,p)=\inf_{\zeta\in C_{c}^{2}(\mathbb{R}),0<\theta<1}\hat{H}^{\zeta,\theta}(x,p),
\end{equation*}
where $\hat{H}^{\lambda}$ is as defined in \eqref{Hl1}, \eqref{H02} and \eqref{Hl3}, is less than or equal to every supersolution of
\begin{equation*}
\partial_{t}U(t,x)\geq \check{H}(x,p):=\sup_{\lambda\in\Lambda}\check{H}^{\lambda}(x,p)=\sup_{\zeta\in C_{c}^{2}(\mathbb{R}),0<\theta<1}\check{H}^{\zeta,\theta}(x,p),
\end{equation*}
where $\check{H}^{\lambda}$ is as defined in \eqref{Hm1}, \eqref{H22} and \eqref{Hm3}.

 An important step is to check that the following operator inequality
\begin{equation}\label{ineq}
\inf_{\lambda\in\Lambda}\hat{H}^{\lambda}(x,p)\leq H^{0}(x,p)\leq\sup_{\lambda\in\Lambda}\check{H}^{\lambda}(x,p)
\end{equation}
holds, where $H^{0}(x,p)$ is as defined in \eqref{super0U0}, \eqref{hh0} and \eqref{hHV}. With this inequality, we can use the arguments from the proof of the comparison principle presented in \cite{FK06}.

For the critical case ($\alpha=2$), we denote
\begin{equation*}
T(t)f(y):=\mathbb{E}[\,f(Y_{t}^{x})\,|\,Y_{0}^{x}=y\,],~~~f\in C_{b}(\mathbb{R}),
\end{equation*}
where $Y^{x}$ is the $\mathbb{R}$-valued Markov process as in \eqref{YYx}.
For each $f\in D^{++}(\mathcal{L}^{x,p})\subset C_{b}(\mathbb{R})$, using $(\textbf{C}_{5})-(\textbf{ii})$, there exists compact $K\subset\subset\mathbb{R}$ with
\begin{align}\nonumber
   &\sup\limits_{y\in\mathbb{R}}\{\,V^{x,p}(y)+(1-\theta)\frac{\mathcal{L}^{x,p}f(y)}{f(y)}+\theta e^{-\xi(y)}\mathcal{L}^{x,p}e^{\xi(y)}\,\}\\\label{Xian}
   &=\sup\limits_{y\in K}\{\,V^{x,p}(y)+(1-\theta)\frac{\mathcal{L}^{x,p}f(y)}{f(y)}+\theta e^{-\xi(y)}\mathcal{L}^{x,p}e^{\xi(y)}\,\}.
\end{align}
For each $\varepsilon>0$, by truncating and mollifying $f$, we can find a $\zeta\in C_{c}^{2}(\mathbb{R})$ such that
\begin{equation}\label{Bu}
\hat{H}^{\lambda}(x,p)\leq\varepsilon+\sup\limits_{y\in K}\{\,V^{x,p}(y)+(1-\theta)\frac{\mathcal{L}^{x,p}f(y)}{f(y)}+\theta e^{-\xi(y)}\mathcal{L}^{x,p}e^{\xi(y)}\,\},
\end{equation}
where $\hat{H}^{\lambda}(x,p)$ given by \eqref{H02}. Then by \eqref{Xian} and \eqref{Bu}, we have
\begin{equation}\label{infinf}
\inf_{\lambda\in\Lambda}\hat{H}^{\lambda}(x,p)\leq\inf_{0<\theta<1}\inf_{f\in D^{++}(\mathcal{L}^{x,p})}\sup\limits_{y\in\mathbb{R}}\{\,V^{x,p}(y)+(1-\theta)\frac{\mathcal{L}^{x,p}f(y)}{f(y)}+\theta e^{-\xi(y)}\mathcal{L}^{x,p}e^{\xi(y)}\,\}.
\end{equation}
Similarly, we obtain
\begin{equation}\label{supsup}
\sup_{\lambda\in\Lambda}\check{H}^{\lambda}(x,p)\geq\sup_{0<\theta<1}\sup_{f\in D^{++}(\mathcal{L}^{x,p})}\inf\limits_{y\in\mathbb{R}}\{\,V^{x,p}(y)+(1+\theta)\frac{\mathcal{L}^{x,p}f(y)}{f(y)}-\theta e^{-\xi(y)}\mathcal{L}^{x,p}e^{\xi(y)}\,\},
\end{equation}
where $\check{H}^{\lambda}(x,p)$ as in \eqref{H22}.

We can find a sequence $\{f_{n}\}\subset D^{++}(\mathcal{L}^{x,p})$ by taking $f_{n}:=e^{\xi_{n}}$, where $\xi_{n}\in C_{c}^{2}(\mathbb{R})$ are some smooth truncations of $\xi$, such that
\begin{equation*}
\int_{\mathbb{R}}e^{-\xi(y)}\mathcal{L}^{x,p}e^{\xi(y)}d\mu(y)\geq\limsup_{n\rightarrow\infty}\int_{\mathbb{R}}\frac{\mathcal{L}^{x,p}f_{n}(y)}{f_{n}(y)}d\mu(y),\quad\mu\in\mathcal{P}(\mathbb{R}).
\end{equation*}
Then we get
{\small\begin{equation*}
\inf_{f\in D^{++}(\mathcal{L}^{x,p})}\int_{\mathbb{R}}\frac{\mathcal{L}^{x,p}f(y)}{f(y)}d\mu(y)\wedge\int_{\mathbb{R}}e^{-\xi(y)}\mathcal{L}^{x,p}e^{\xi(y)}d\mu(y)=\inf\limits_{f\in D^{++}(\mathcal{L}^{x,p})}\int_{\mathbb{R}}\frac{\mathcal{L}^{x,p}f(y)}{f(y)}d\mu(y)=-J^{x,p}(\mu),
\end{equation*}}
where $J^{x,p}(\mu)$ defined in \eqref{j}.

Let $f_{1},\cdot\cdot\cdot,f_{m}\in D^{++}(\mathcal{L}^{x,p})$, $\sum_{i=1}^{m}\alpha_{i}=1$, $\alpha_{i}\geq0$, and set
\begin{equation*}
f_{h}=\frac{1}{h}\int_{0}^{h}T(s)\prod_{i=1}^{m}f_{i}^{\alpha_{i}}ds,\quad\text{for}~~h>0.
\end{equation*}
Observing that
\begin{equation*}
T(t)\prod_{i=1}^{m}f_{i}^{\alpha_{i}}\leq\prod_{i=1}^{m}(T(t)f_{i})^{\alpha_{i}}.
\end{equation*}
As $h\rightarrow0$, we have
\begin{equation*}
\mathcal{L}^{x,p}f_{h}=\frac{T(h)f_{h}-f_{h}}{h}\leq\frac{1}{h}\left(\prod_{i=1}^{m}(T(h)f_{i})^{\alpha_{i}}-\prod_{i=1}^{m}f_{i}^{\alpha_{i}}\right)\rightarrow\prod_{i=1}^{m}f_{i}^{\alpha_{i}}\sum_{i=1}^{m}\alpha_{i}\frac{\mathcal{L}^{x,p}f_{i}}{f_{i}},
\end{equation*}
where the convergence is uniform. It follows that
{\footnotesize\begin{align*}
&\inf_{0<\theta<1}\inf_{f\in D^{++}(\mathcal{L}^{x,p})}\sup\limits_{y\in\mathbb{R}}\{\,V^{x,p}(y)+(1-\theta)\frac{\mathcal{L}^{x,p}f(y)}{f(y)}+\theta e^{-\xi(y)}\mathcal{L}^{x,p}e^{\xi(y)}\,\}\\
&\leq\inf_{0<\theta<1}\inf_{\sum_{i=1}^{m}\alpha_{i}=1}\inf_{f_{i}\in D^{++}(\mathcal{L}^{x,p})}\sup\limits_{y\in\mathbb{R}}\{\,V^{x,p}(y)+(1-\theta)\sum_{i=1}^{m}\alpha_{i}\frac{\mathcal{L}^{x,p}f_{i}(y)}{f_{i}(y)}+\theta e^{-\xi(y)}\mathcal{L}^{x,p}e^{\xi(y)}\,\}\\
&=\lim_{m\rightarrow\infty}\inf_{0<\theta<1}\inf_{\sum_{i=1}^{m}\alpha_{i}=1}\sup\limits_{y\in\mathbb{R}}\{\,V^{x,p}(y)+(1-\theta)\sum_{i=1}^{m}\alpha_{i}\frac{\mathcal{L}^{x,p}f_{i}(y)}{f_{i}(y)}+\theta e^{-\xi(y)}\mathcal{L}^{x,p}e^{\xi(y)}\,\}\\
&=\lim_{m\rightarrow\infty}\inf_{0<\theta<1}\sup\limits_{\mu\in\mathcal{P}(\mathbb{R})}\inf_{\sum_{i=1}^{m}\alpha_{i}=1}\int_{\mathbb{R}}\Big(V^{x,p}(y)+(1-\theta)\sum_{i=1}^{m}\alpha_{i}\frac{\mathcal{L}^{x,p}f_{i}(y)}{f_{i}(y)}+\theta e^{-\xi(y)}\mathcal{L}^{x,p}e^{\xi(y)}\Big)\mu(dy)\\
&=\lim_{m\rightarrow\infty}\inf_{0<\theta<1}\sup\limits_{\mu\in\mathcal{P}(\mathbb{R})}\Big[\int_{\mathbb{R}}V^{x,p}(y)\mu(dy)+(1-\theta)\min_{1\leq i\leq m}\int_{\mathbb{R}}\frac{\mathcal{L}^{x,p}f_{i}(y)}{f_{i}(y)}\mu(dy)\wedge\int_{\mathbb{R}}e^{-\xi(y)}\mathcal{L}^{x,p}e^{\xi(y)}\mu(dy)\\
&\,\,\,\,\,\,\,\,\,+\theta\int_{\mathbb{R}}e^{-\xi(y)}\mathcal{L}^{x,p}e^{\xi(y)}\mu(dy)\Big]\\
&=\lim_{m\rightarrow\infty}\sup\limits_{\mu\in\mathcal{P}(\mathbb{R})}\Big[\int_{\mathbb{R}}V^{x,p}(y)\mu(dy)+\min_{1\leq i\leq m}\int_{\mathbb{R}}\frac{\mathcal{L}^{x,p}f_{i}(y)}{f_{i}(y)}\mu(dy)\wedge\int_{\mathbb{R}}e^{-\xi(y)}\mathcal{L}^{x,p}e^{\xi(y)}\mu(dy)\Big]\\
&=\sup_{\mu\in\mathcal{P}(\mathbb{R})}\big[\int_{\mathbb{R}}V^{x,p}(y)d\mu(y)-J^{x,p}(\mu)\big]=H^{0}(x,p).
\end{align*}}
Combined with \eqref{infinf}, we obtain
\begin{equation*}
\inf_{\lambda\in\Lambda}\hat{H}^{\lambda}(x,p)\leq H^{0}(x,p).
\end{equation*}

Based on \eqref{supsup} and \cite[Lemma B.10]{FK06}, we have
\begin{equation}\label{bus}
\sup_{\lambda\in\Lambda}\check{H}^{\lambda}(x,p)\geq\inf_{\nu\in\mathcal{P}(\mathbb{R})}\liminf_{t\rightarrow\infty}\frac{1}{t}\ln \mathbb{E}^{\nu}[e^{\int_{0}^{t}V^{x,p}(Y_{s}^{x,p})ds}].
\end{equation}
Thus in order to prove that
\begin{equation*}
 H^{0}(x,p)\leq\sup_{\lambda\in\Lambda}\check{H}^{\lambda}(x,p),
\end{equation*}
it suffices to check that if
\begin{equation*}
\inf_{\nu\in\mathcal{P}(\mathbb{R})}\liminf_{t\rightarrow\infty}\frac{1}{t}\ln \mathbb{E}^{\nu}\left[e^{\int_{0}^{t}V^{x,p}(Y_{s}^{x,p})ds}\right]\geq H^{0}(x,p).
\end{equation*}
Define the occupation measures of the process $Y^{x,p}$:
\begin{equation*}
\mu_{t}^{x,p}(\cdot)=\frac{1}{t}\int_{0}^{t}1_{Y_{s}^{x,p}}(\cdot)ds\quad\text{for}~~t>0.
\end{equation*}
Since the Euclidean metric space $(\mathbb{R}, d)$ is separable, the Prokhorov metric space $(\mathcal{P}(\mathbb{R}), \tilde{d})$ is also separable. And then convergence of measures in the Prokhorov metric is equivalent to weak convergence of measures. For $B\in\mathcal{B}(\mathcal{P}(\mathbb{R}))$,
\begin{equation*}
\mathbb{P}_{t}^{y_{0}}(B):=\mathbb{P}(\mu_{t}^{x,p}(\cdot)\in B|Y_{0}^{x,p}=y_{0})
\end{equation*}
is a probability measure on $\mathcal{P}(\mathbb{R})$ induced by the occupation measures $\mu_{t}^{x,p}$ of the process $Y^{x,p}$ with initial value  $Y_{0}^{x,p}=y_{0}$.

Define $\phi: \mathcal{P}(\mathbb{R})\rightarrow\mathbb{R}$ by $\phi(\mu)=\int_{\mathbb{R}}V^{x,p}(y)\mu(dy)$. Let $V_{\varepsilon}^{x,p}:=V^{x,p}\mathds{1}_{\{V^{x,p}\leq \varepsilon\}}+\varepsilon\mathds{1}_{\{V^{x,p}\geq \varepsilon\}}$ for $\varepsilon\geq\inf_{y\in\mathbb{R}}V^{x,p}(y)$, then $V_{\varepsilon}^{x,p}$ is bounded.
By definition of weak convergence of measures, if $\mu_{n}\rightarrow\mu$ weakly, we have
\begin{equation*}
\lim_{n\rightarrow\infty}\int_{\mathbb{R}}V_{\varepsilon}^{x,p}(y)\mu_{n}(dy)=\int_{\mathbb{R}}V_{\varepsilon}^{x,p}(y)\mu(dy).
\end{equation*}
Furthermore, using the monotone convergence theorem, we get
\begin{align*}
\phi(\mu)&=\lim_{\varepsilon\rightarrow\infty}\int_{\mathbb{R}}V_{\varepsilon}^{x,p}(y)\mu(dy)=\lim_{\varepsilon\rightarrow\infty}\lim_{n\rightarrow\infty}\int_{\mathbb{R}} V_{\varepsilon}^{x,p}(y)\mu_{n}(dy)\\
         &=\sup_{\varepsilon}\liminf_{n\rightarrow\infty}\int_{\mathbb{R}} V_{\varepsilon}^{x,p}(y)\mu_{n}(dy)\leq\liminf_{n\rightarrow\infty}\sup_{\varepsilon}\int_{\mathbb{R}}V_{\varepsilon}^{x,p}(y)\mu_{n}(dy)\\
         &=\liminf_{n\rightarrow\infty}\int_{\mathbb{R}}V^{x,p}(y)\mu_{n}(dy)=\liminf_{n\rightarrow\infty}\phi(\mu_{n}),
\end{align*}
which shows that $\phi(\mu)$ is a lower semicontinuous function on $\mathcal{P}(\mathbb{R})$.

Fix $\nu\in\mathcal{P}(\mathbb{R})$, then there exists a compact set $K$ in $\mathbb{R}$ such that $\nu(K)>0$.
For $\mu\in\mathcal{P}(\mathbb{R})$, $B_{r}(\mu)$ is the open ball in $\mathcal{P}(\mathbb{R})$ of radius $r$ centered at $\mu$. Then we have
{\small\begin{align}\nonumber
\liminf_{t\rightarrow\infty}\frac{1}{t}\ln \mathbb{E}^{\nu}\left[e^{\int_{0}^{t}V^{x,p}(Y_{s}^{x,p})ds}\right]
&=\liminf_{t\rightarrow\infty}\frac{1}{t}\ln \mathbb{E}^{\nu}\left[e^{t\phi(\mu_{t}^{x,p})}\right]\\ \nonumber
&\geq\liminf_{t\rightarrow\infty}\frac{1}{t}\ln \mathbb{E}^{\nu}\left[e^{t\phi(\mu_{t}^{x,p})}1_{\{Y_{0}^{x,p}\in K\}}\right]\\ \nonumber
&\geq\liminf_{t\rightarrow\infty}\frac{1}{t}\ln\left[\inf_{y_{0}\in K}\mathbb{E}^{y_{0}}(e^{t\phi(\mu_{t}^{x,p})})\right]+\liminf_{t\rightarrow\infty}\frac{1}{t}\ln\nu(K)\\ \nonumber
&=\liminf_{t\rightarrow\infty}\frac{1}{t}\ln\left[\inf_{y_{0}\in K}\int_{\tilde{\mu}\in\mathcal{P}(\mathbb{R})}e^{t\phi(\tilde{\mu})}d\mathbb{P}_{t}^{y_{0}}(\tilde{\mu})\right]\\ \nonumber
&\geq\liminf_{t\rightarrow\infty}\frac{1}{t}\ln\left[\inf_{y_{0}\in K}\int_{\tilde{\mu}\in B_{r}(\mu)}e^{t\phi(\tilde{\mu})}d\mathbb{P}_{t}^{y_{0}}(\tilde{\mu})\right]\\ \nonumber
&\geq\inf_{\tilde{\mu}\in B_{r}(\mu)}\phi(\tilde{\mu})+\liminf_{t\rightarrow\infty}\frac{1}{t}\ln\left[\inf_{y_{0}\in K}\mathbb{P}_{t}^{y_{0}}(B_{r}(\mu))\right]\\\label{low}
&\geq\inf_{\tilde{\mu}\in B_{r}(\mu)}\phi(\tilde{\mu})-J^{x,p}(\mu).
\end{align}}
The last inequality follows from the uniform lower bound of large deviation principle for the occupation measures $\mu_{t}^{x,p}$:
\begin{equation*}
\liminf_{t\rightarrow\infty}\frac{1}{t}\ln[\inf_{y_{0}\in K}\mathbb{P}_{t}^{y_{0}}(B_{r}(\mu))]\geq-J^{x,p}(\mu),
\end{equation*}
which can be obtained from \cite[Theorem 5.5]{DV83} under the condition ($\textbf{C}_{4}$) by the contraction principle.

Because $\phi$ is a lower semicontinuous function, by taking limit \eqref{low} as $r\rightarrow0$, we get
\begin{equation*}
\liminf_{t\rightarrow\infty}\frac{1}{t}\ln \mathbb{E}^{\nu}\left[e^{\int_{0}^{t}V^{x,p}(Y_{s}^{x,p})ds}\right]\geq\phi(\mu)-J^{x,p}(\mu).
\end{equation*}
Since $\mu$ is arbitrary, moreover, we have
\begin{equation*}
\inf_{\nu\in\mathcal{P}(\mathbb{R})}\liminf_{t\rightarrow\infty}\frac{1}{t}\ln \mathbb{E}^{\nu}\left[e^{\int_{0}^{t}V^{x,p}(Y_{s}^{x,p})ds}\right]\geq\sup_{\mu\in\mathcal{P}(\mathbb{R})}\{\phi(\mu)-J^{x,p}(\mu)\}=H^{0}(x,p).
\end{equation*}
Then we finished the proof of the operator inequality \eqref{ineq} in critical case. For the supercritical case ($\alpha>2$) and subcritical case ($\alpha<2$), the operator inequality \eqref{ineq} can be proven with similar ideas.

\medskip
 Now we state the main result of the paper, namely, the convergence result for the Cauchy problem for partial integro-differential equations.
\begin{lemma}\label{con}
Assumptions $(\textbf{C}_{1})-(\textbf{C}_{5})$ hold. Suppose the comparison principle holds for the Cauchy problem \eqref{sU0}.
Then the sequence of functions $\{U^{\varepsilon}\}_{\varepsilon>0}$ defined in the Cauchy problem \eqref{sU} converge uniformly over compact subsets of $[0,T]\times\mathbb{R}\times\mathbb{R}$ as $\varepsilon\rightarrow0$ to the unique continuous viscosity solution $U^{0}$ of \eqref{sU0}.
\end{lemma}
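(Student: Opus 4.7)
The plan is to assemble the pieces already built in Sections~\ref{CH} and \ref{CT}, with Lemmas~\ref{subsup}--\ref{son} and Theorem~\ref{unf} providing the abstract framework. First I would check that $\sup_{\varepsilon>0}\|U^{\varepsilon}\|_{\infty}<\infty$. Since $h\in C_{b}(\mathbb{R})$, the exponential transform $V^{\varepsilon}=e^{U^{\varepsilon}/\varepsilon}$ defined via \eqref{VVeps} is bounded above by $e^{\|h\|_{\infty}/\varepsilon}$ and below by $e^{-\|h\|_{\infty}/\varepsilon}$ (by the Feynman--Kac representation and the monotonicity of conditional expectation), which upon taking $\varepsilon\ln(\cdot)$ yields $\|U^{\varepsilon}\|_{\infty}\leq\|h\|_{\infty}$ uniformly in $\varepsilon$. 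This gives the equiboundedness hypothesis required by Lemma~\ref{subsup} and, together with Remark~1, ensures that $\hat{U}$ and $\check{U}$ from Definition~\ref{defi} are well-defined, bounded, upper- and lower-semicontinuous functions respectively.

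Next I would invoke the verification of Condition~\ref{cond1} carried out case by case in Section~\ref{CH} for the supercritical ($\alpha>2$), critical ($\alpha=2$), and subcritical ($\alpha<2$) regimes, with the test functions $\hat{f}_{\varepsilon}$, $\check{f}_{\varepsilon}$, indexing set $\Lambda=\{(\zeta,\theta):\zeta\in C_{c}^{2}(\mathbb{R}),0<\theta<1\}$, and limiting operator families $\hat{H}^{\lambda}$, $\check{H}^{\lambda}$ exhibited there (equations \eqref{Hl1}--\eqref{Hm1}, \eqref{H02}--\eqref{H22}, \eqref{Hl3}--\eqref{Hm3}). Applying Lemma~\ref{subsup} then yields that $\hat{U}$ is a viscosity subsolution of $\partial_{t}U(t,x)\leq\hat{H}(x,\partial_{x}U(t,x))$ and $\check{U}$ is a viscosity supersolution of $\partial_{t}U(t,x)\geq\check{H}(x,\partial_{x}U(t,x))$, both with initial datum $h$.

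The key step is to promote these into sub- and super-solutions of the single Cauchy problem \eqref{sU0} governed by $H^{0}$. This is precisely what the operator inequality \eqref{ineq} established in Section~\ref{CR} delivers:
\begin{equation*}
\hat{H}(x,p)=\inf_{\lambda\in\Lambda}\hat{H}^{\lambda}(x,p)\leq H^{0}(x,p)\leq\sup_{\lambda\in\Lambda}\check{H}^{\lambda}(x,p)=\check{H}(x,p).
\end{equation*}
Hence any subsolution of $\partial_{t}U\leq\hat{H}$ is automatically a subsolution of $\partial_{t}U\leq H^{0}$, and any supersolution of $\partial_{t}U\geq\check{H}$ is automatically a supersolution of $\partial_{t}U\geq H^{0}$. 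In particular $\hat{U}$ and $\check{U}$ are a bounded upper-semicontinuous subsolution and a bounded lower-semicontinuous supersolution of \eqref{sU0}, agreeing at $t=0$ with $h$.

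Finally I would apply the comparison principle from Theorem~\ref{unf} (together with the uniform continuity of $H^{0}$ on compact sets, which is the standing assumption of Lemma~\ref{con} via the comparison principle) to conclude $\hat{U}\leq\check{U}$ on $[0,T]\times\mathbb{R}$. Since by construction $\check{U}\leq\hat{U}$, the two coincide, giving a continuous function $U^{0}:=\hat{U}=\check{U}$ which is simultaneously sub- and super-solution, hence \emph{the} viscosity solution of \eqref{sU0}; its uniqueness follows again from Theorem~\ref{unf}. Lemma~\ref{son} then upgrades the equality $\hat{U}=\check{U}$ to uniform convergence of $U^{\varepsilon}(t,x,y)\to U^{0}(t,x)$ over compact subsets of $[0,T]\times\mathbb{R}\times\mathbb{R}$, which is the desired conclusion. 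The main obstacle lies not in this assembly step but in the machinery it rests on: verifying the operator inequality \eqref{ineq} in the critical regime required the delicate Donsker--Varadhan variational identification of $H^{0}$ as a principal eigenvalue and the approximation by iterates $f_{h}=h^{-1}\int_{0}^{h}T(s)\prod f_{i}^{\alpha_{i}}ds$, and establishing the uniform LDP lower bound $\liminf_{t\to\infty}t^{-1}\ln\inf_{y_{0}\in K}P_{t}^{y_{0}}(B_{r}(\mu))\geq -J^{x,p}(\mu)$; any gap in those inputs (e.g.\ failure of the Lyapunov condition $(\textbf{C}_{5})$ or of ergodicity $(\textbf{C}_{4})$) would propagate here and block the chain of implications above.
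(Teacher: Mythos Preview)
Your proposal is correct and follows essentially the same route as the paper's proof: form $\hat{U},\check{U}$ via Definition~\ref{defi}, invoke the verification of Condition~\ref{cond1} from Section~\ref{CH}, apply Lemma~\ref{subsup}, bridge to $H^{0}$ via the operator inequality~\eqref{ineq} from Section~\ref{CR}, and then conclude with Theorem~\ref{unf} and Lemma~\ref{son}. You are in fact more explicit than the paper on two points---the uniform bound $\|U^{\varepsilon}\|_{\infty}\leq\|h\|_{\infty}$ and the role of~\eqref{ineq} in passing from $\hat{H},\check{H}$ to $H^{0}$---which the paper's short proof absorbs into the phrase ``Applying Lemma~\ref{subsup}, $\hat{U}$ is a subsolution and $\check{U}$ is a supersolution of the Cauchy problem~\eqref{sU0}.''
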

\begin{proof}
For $\{U^{\varepsilon}\}_{\varepsilon>0}$ defined in the Cauchy problem \eqref{sU}, we can get the corresponding $\hat{U}$ and $\check{U}$ by using Definition \ref{defi}. It has been checked that Condition \ref{cond1} holds for $H^{\varepsilon}$ defined by \eqref{Hed}. Applying Lemma \ref{subsup}, $\hat{U}$ is a subsolution and $\check{U}$ is a supersolution of the Cauchy problem \eqref{sU0}. Based on Theorem \ref{unf}, the comparison principle is valid for the Cauchy problem \eqref{sU0}. Then from Lemma \ref{son}, we have $\hat{U}=\check{U}$ and  $U^{\varepsilon}\rightarrow U^{0}:=\hat{U}=\check{U}$  uniformly over compact subsets of $[0,T]\times\mathbb{R}\times\mathbb{R}$.
\end{proof}

\section{Large deviation principle}\label{secton5}

The Bryc's theorem (see \cite[p142]{DZ10}) permits to derive the large deviation principle as a consequence of exponential tightness of $\{X_{t}^{\varepsilon}\}_{\varepsilon>0}$ in system \eqref{2} and the existence of $U^{0}$ based on \eqref{sU0} for every $h\in C_{b}(\mathbb{R})$. We begin with the following lemma.
\begin{lemma}\label{exp0}
The sequence of processes $\{X_{t}^{\varepsilon}\}_{\varepsilon>0}$ in system \eqref{2} is exponentially tight.
\end{lemma}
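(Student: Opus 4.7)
The plan is to establish exponential tightness in the Skorokhod path space $D([0,T],\mathbb{R})$ via the standard two-ingredient approach of Feng and Kurtz (see \cite{FK06}): an exponential compact containment bound plus an exponential modulus of continuity estimate. Both ingredients come from applying Doob's maximal inequality to exponential super-martingales built from the nonlinear generator $H^{\varepsilon}$ in \eqref{Hed}.

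For compact containment I would pick the test function $\psi(x) = \beta\log(1+x^{2})$ with $\beta>0$, which has compact level sets and is the natural Lyapunov candidate already singled out by the domain $\mathcal{D}_{\pm}$ of Section~\ref{secton4}. Because $\psi$ depends only on the slow variable $x$, every term in \eqref{Hed} involving a derivative in $y$ or the operator $\mathcal{L}^{x}$ vanishes identically, leaving
\begin{equation*}
H^{\varepsilon}\psi(x,y) = \varepsilon\bigl(b_{1}\psi'+\sigma_{1}^{2}\psi''\bigr) + \sigma_{1}^{2}(\psi')^{2} + \int_{\mathbb{R}\setminus\{0\}}\!\bigl(e^{[\psi(x+\varepsilon k_{1})-\psi(x)]/\varepsilon}-1-k_{1}\psi'\bigr)\nu_{1}(dz).
\end{equation*}
Boundedness of $\psi'(x)=2\beta x/(1+x^{2})$ and $\psi''$, combined with the periodicity of the coefficients in $y$ given by $(\mathbf{C}_{3})$ and the growth condition $(\mathbf{C}_{2})$, keeps the first two groups of terms uniformly bounded in $(x,y,\varepsilon)$. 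For the integral term I would expand $[\psi(x+\varepsilon k_{1})-\psi(x)]/\varepsilon = k_{1}\psi'(x) + O(\varepsilon k_{1}^{2})$, apply the elementary inequality $e^{u}-1-u\leq\tfrac12 u^{2} e^{|u|}$, and combine with $(\mathbf{C}_{2})$ to obtain a uniform upper bound $H^{\varepsilon}\psi \leq C$. Then
\begin{equation*}
N_{t}^{\varepsilon} := \exp\!\left(\frac{\psi(X_{t}^{\varepsilon})-\psi(x_{0})}{\varepsilon} - \int_{0}^{t}\frac{H^{\varepsilon}\psi(X_{s}^{\varepsilon},Y_{s}^{\varepsilon})}{\varepsilon}\,ds\right)
\end{equation*}
is a nonnegative local martingale, hence a supermartingale, and Doob's maximal inequality produces
\begin{equation*}
\mathbb{P}\Bigl(\sup_{t\in[0,T]}\psi(X_{t}^{\varepsilon}) \geq \psi(x_{0})+CT+M\Bigr) \leq e^{-M/\varepsilon},
\end{equation*}
so that the compact set $K_{M}:=\{\psi\leq\psi(x_{0})+CT+M\}$ satisfies $\limsup_{\varepsilon\to 0}\varepsilon\log\mathbb{P}(X_{t}^{\varepsilon}\notin K_{M}\text{ for some }t\in[0,T])\leq -M$.

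Compact containment is then combined with the exponential analogue of the Aldous--Kurtz tightness criterion (\cite[Theorem~4.1, Corollary~4.17]{FK06}): the fact that $H^{\varepsilon}f_{\varepsilon}$ stays uniformly bounded whenever $f_{\varepsilon}$ is itself controlled (exactly the content of \eqref{con11} verified throughout Section~\ref{CH}) provides the exponential modulus-of-continuity control and upgrades pointwise compact containment to tightness in the full Skorokhod path space. The principal technical obstacle is the careful treatment of the jump integral in $H^{\varepsilon}\psi$, where the exponentiation $e^{(\psi(x+\varepsilon k_{1})-\psi(x))/\varepsilon}$ and the linear-in-$(x,y)$ growth permitted by $(\mathbf{C}_{2})$ must be reconciled; this is the only place where one may need to invoke a mild additional exponential moment property of $\nu_{1}$ beyond the square-integrability already imposed, after which the rest reduces to routine application of the Feng--Kurtz machinery.
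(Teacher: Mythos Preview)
Your approach is correct and takes a genuinely different (and in some respects simpler) route than the paper. The paper's proof does \emph{not} use a test function depending on $x$ alone; instead it sets
\[
f_{\varepsilon}(x,y)=\ln(1+x^{2})+\varepsilon^{\alpha-1}\xi(y)\quad(\alpha\geq 2),\qquad
f_{\varepsilon}(x,y)=\ln(1+x^{2})+\varepsilon^{\alpha/2}\xi(y)\quad(1<\alpha<2),
\]
with $\xi$ the Lyapunov function from $(\mathbf{C}_{5})$, computes $H^{\varepsilon}f_{\varepsilon}$ in each regime, and uses $(\mathbf{C}_{5})$ to bound $\sup_{x,y}H^{\varepsilon}f_{\varepsilon}\leq C$; it then applies the optional sampling theorem to the exponential supermartingale to obtain $\varepsilon\ln P(X_{t}^{\varepsilon}\notin K_{c})\leq tC+f_{\varepsilon}(x,y)-c$ for a \emph{fixed} time $t$, i.e.\ only marginal exponential tightness in $\mathbb{R}$ (which is all Bryc's lemma needs). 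Your choice $\psi(x)=\beta\ln(1+x^{2})$ is cleaner because, as you observe, $\mathcal{L}^{x}$ annihilates functions constant in $y$, so every fast-variable term in \eqref{Hed} drops out without invoking $(\mathbf{C}_{5})$ at all; the price is that you must then rely on periodicity $(\mathbf{C}_{3})$ to control the $y$-dependence of $b_{1},\sigma_{1},k_{1}$, whereas the paper's $\xi$-augmented test function would survive in non-periodic settings where only a Lyapunov structure is available. Two further remarks: (i) your use of Doob's maximal inequality yields a uniform-in-$t$ containment bound and your second step upgrades to path-space tightness in $D([0,T],\mathbb{R})$, which is strictly more than the paper proves or needs; (ii) the jump-integral issue you flag---the implicit need for an exponential moment of $k_{1}$ against $\nu_{1}$ so that $\int(e^{c|k_{1}|}-1)\,\nu_{1}(dz)<\infty$---is equally present in the paper's argument (the same integral appears in its $H^{\varepsilon}f_{\varepsilon}$) but is not addressed there either, so you are not losing anything relative to the original on this point.
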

\begin{proof}
Define
\begin{eqnarray*}
      f_{\varepsilon}(x,y):=
        \left\{\begin{array}{l}
       f(x)+\varepsilon^{\alpha-1}\xi(y)\quad\text{for}~~\alpha\geq2;\\
       f(x)+\varepsilon^{\frac{\alpha}{2}}\xi(y)\quad\text{for}~~1<\alpha<2.
        \end{array}\right.
      \end{eqnarray*}
Where $f(x):=\ln(1+x^{2})$ and $\xi(y)$ is the positive Lyapunov function satisfying condition ($\textbf{C}_{5}$) with $\theta=1$. Since $f(x)$ is an increasing function of $|x|$ and $\xi(y)>0$, we have that for any $c>0$ there exits a compact set $K_{c}\subset\mathbb{R}$ such that
\begin{equation}\label{c}
f_{\varepsilon}(x,y)>0 \quad\text{for}~~ x\notin K_{c},~y\in\mathbb{R}.
\end{equation}
When $\alpha>2$,  we have
\begin{align*}
H^{\varepsilon}f_{\varepsilon}(x,y)=&\sigma_{1}^{2}(x,y)\big(\partial_{x}f(x)\big)^{2}+\varepsilon\big(b_{1}(x,y)\partial_{x}f(x)+\sigma_{1}^{2}(x,y)\partial_{xx}^{2}f(x)\big)\\ \nonumber
&+\int_{\mathbb{R}\setminus \{0\}}\Big(e^{\frac{1}{\varepsilon}[f(x+\varepsilon k_{1}(x,y,z))-f(x)]}-1-k_{1}(x,y,z)\partial_{x}f(x)\Big)\nu_{1}(dz)\\ \nonumber
&+2\varepsilon^{\frac{\alpha}{2}-1}\rho\sigma_{1}(x,y)\sigma_{2}(x,y)\partial_{x}f(x)\partial_{y}\xi(y) +\varepsilon^{2-\alpha}e^{-\varepsilon^{\alpha-2}\xi(y)}\mathcal{L}^{x}e^{\varepsilon^{\alpha-2}\xi(y)}.
\end{align*}
For $\alpha=2$, we get
\begin{align*}
H^{\varepsilon}f_{\varepsilon}(x,y)=&\sigma_{1}^{2}(x,y)\big(\partial_{x}f(x)\big)^{2}+\varepsilon \big(b_{1}(x,y)\partial_{x}f(x)+\sigma_{1}^{2}(x,y)\partial_{xx}^{2}f(x)\big)\\ \nonumber
&+\int_{\mathbb{R}\setminus \{0\}}\Big(e^{\frac{1}{\varepsilon}[f(x+\varepsilon k_{1}(x,y,z))-f(x)]}-1-k_{1}(x,y,z)\partial_{x}f(x)\Big)\nu_{1}(dz)\\ \nonumber
&+2\rho\sigma_{1}(x,y)\sigma_{2}(x,y)\partial_{x}f(x)\partial_{y}\xi(y)+e^{-\xi(y)}\mathcal{L}^{x}e^{\xi(y)}.
\end{align*}
In the case $1<\alpha<2$, we obtain
\begin{align*}
H^{\varepsilon}f_{\varepsilon}(x,y)=&\sigma_{1}^{2}(x,y)\big(\partial_{x}f(x)\big)^{2}+\varepsilon\big(b_{1}(x,y)\partial_{x}f(x)+\sigma_{1}^{2}(x,y)\partial_{xx}^{2}f(x)\big)\\ \nonumber
&+\int_{\mathbb{R}\setminus \{0\}}\Big(e^{\frac{1}{\varepsilon}[f(x+\varepsilon k_{1}(x,y,z))-f(x)]}-1-k_{1}(x,y,z)\partial_{x}f(x)\Big)\nu_{1}(dz)\\ \nonumber
&+2\rho\sigma_{1}(x,y)\sigma_{2}(x,y)\partial_{x}f(x)\partial_{y}\xi(y)+\varepsilon^{2-\alpha}e^{-\varepsilon^{\frac{\alpha}{2}-1}\xi(y)}\mathcal{L}^{x}e^{\varepsilon^{\frac{\alpha}{2}-1}\xi(y)}.                           \end{align*}
We observe that $|\partial_{x}f(x)|+|\partial_{xx}^{2}f(x)|<\infty$ and growth condition on the coefficients. By our choice of $\xi$ with bounded first and second derivatives, there exists $C>0$ such that
\begin{equation}\label{C}
\sup_{x\in\mathbb{R},y\in\mathbb{R}}H^{\varepsilon}f_{\varepsilon}(x,y)\leq C<\infty, \quad\forall~\varepsilon>0.
\end{equation}
The $\mathbb{P}$ and $\mathbb{E}$ in the following proof denote probability and expectation conditioned on $(X_{t}^{\varepsilon},Y_{t}^{\varepsilon})$ starting at $(x,y)$. Define the process
\begin{equation*}
M_{t}^{\varepsilon}=\exp\left\{\frac{f_{\varepsilon}(X_{t}^{\varepsilon},Y_{t}^{\varepsilon})}{\varepsilon}-\frac{f_{\varepsilon}(x,y)}{\varepsilon}-\frac{1}{\varepsilon}\int_{0}^{t}H^{\varepsilon}f_{\varepsilon}(X_{s}^{\varepsilon},Y_{s}^{\varepsilon})ds\right\}
\end{equation*}
Then $M_{t}^{\varepsilon}$ is a supermatingale and hence we can apply the optional sampling theorem, that is $\mathbb{E}[M_{t}^{\varepsilon}]\leq1$. So
\begin{equation*}
 1\geq \mathbb{E}[M_{t}^{\varepsilon}|X_{t}^{\varepsilon}\notin K_{c}]\geq \mathbb{E}[e^{\frac{(c-f_{\varepsilon}(x,y)-tC)}{\varepsilon}}|X_{t}^{\varepsilon}\notin K_{c}]=\mathbb{P}(X_{t}^{\varepsilon}\notin K_{c})e^{\frac{(c-f_{\varepsilon}(x,y)-tC)}{\varepsilon}},
\end{equation*}
where we have used \eqref{c} and \eqref{C} to estimate the first and third term in $M_{t}^{\varepsilon}$. Then we get
\begin{equation*}
\varepsilon\ln\mathbb{P}(X_{t}^{\varepsilon}\notin K_{c})\leq tC+f_{\varepsilon}(x,y)-c
\end{equation*}
and this finally gives us the exponential tightness of $\{X_{t}^{\varepsilon}\}_{\varepsilon>0}$.
\end{proof}

\medskip

 We now proceed to argue that a large deviation principle holds for $\{X_{t}^{\varepsilon}\}_{\varepsilon>0}$ as $\varepsilon\rightarrow0$ with speed $1/\varepsilon$ and good rate function $I$ .
\begin{theorem}
Let $X_{0}^{\varepsilon}=x_{0}$, and suppose conditions $(\textbf{C}_{1})-(\textbf{C}_{5})$ hold. Then $\{X_{t}^{\varepsilon}\}_{\varepsilon>0}$ in system \eqref{2} satisfies a large deviation principle in $\mathbb{R}$ with respect to $\mathcal{B}(\mathbb{R})$ with good rate function
\begin{equation}\label{I0}
I(x,x_{0},t)=\sup_{h\in C_{b}(\mathbb{R})}\{h(x)-U^{0}(t,x_{0})\}.
\end{equation}
\end{theorem}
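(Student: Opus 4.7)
The plan is to apply Bryc's inverse Varadhan lemma \cite[Section~4]{DZ10}: once we have (i) exponential tightness of $\{X_t^\varepsilon\}_{\varepsilon>0}$ at speed $1/\varepsilon$ and (ii) the pointwise limit
\[
\Lambda_h := \lim_{\varepsilon\to 0}\varepsilon\ln E\bigl[e^{h(X_t^\varepsilon)/\varepsilon}\bigm|X_0^\varepsilon=x_0\bigr]
\]
existing for every $h\in C_b(\mathbb{R})$, Bryc delivers the large deviation principle automatically, with the good rate function obtained as the Legendre--Fenchel transform $I(x)=\sup_{h\in C_b(\mathbb{R})}\{h(x)-\Lambda_h\}$.

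For (i), I would invoke Lemma~\ref{exp0} directly; the exponential tightness of $\{X_t^\varepsilon\}_{\varepsilon>0}$ established there is exactly what is needed.

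For (ii), I would exploit the viscosity-solution framework developed in Sections~\ref{secton2}--\ref{secton4}. Fix $h\in C_b(\mathbb{R})$ and let $U^\varepsilon$ be the viscosity solution of the Cauchy problem \eqref{sU} with initial value $h$. By \eqref{UU} and \eqref{VVeps},
\[
U^\varepsilon(t,x_0,y_0)=\varepsilon\ln E\bigl[e^{h(X_t^\varepsilon)/\varepsilon}\bigm|X_0^\varepsilon=x_0,Y_0^\varepsilon=y_0\bigr].
\]
Lemma~\ref{con} yields $U^\varepsilon\to U^0$ uniformly on compact subsets of $[0,T]\times\mathbb{R}\times\mathbb{R}$, where $U^0$ solves \eqref{sU0} and, crucially, does \emph{not} depend on $y$. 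Evaluating at the fixed point $(t,x_0,y_0)$ thus gives $\Lambda_h=U^0(t,x_0)$, with the value independent of the initial condition $y_0$ of the fast variable.

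Writing $U^0_h$ to record the dependence of $U^0$ on the initial datum $h$ through \eqref{sU0}, Bryc's lemma then yields the LDP with good rate function $I(x,x_0,t)=\sup_{h\in C_b(\mathbb{R})}\{h(x)-U^0_h(t,x_0)\}$, which matches \eqref{I0} once one reads $U^0(t,x_0)$ in \eqref{I0} as the $h$-dependent limit $\Lambda_h$. Goodness (compactness of level sets) of $I$ is automatic from the exponential tightness under Bryc. I do not anticipate a serious analytic obstacle at this stage: both inputs are in hand via Lemmas~\ref{exp0} and~\ref{con}, and the argument reduces to a citation-level application of Bryc's theorem together with the bookkeeping remark above concerning the implicit $h$-dependence hidden in the notation of \eqref{I0}.
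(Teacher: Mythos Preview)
Your proposal is correct and follows essentially the same approach as the paper: the paper also argues (in the paragraph preceding Lemma~\ref{exp0}) that the large deviation principle follows from Bryc's inverse Varadhan lemma, with exponential tightness supplied by Lemma~\ref{exp0} and the existence of the limit $\Lambda_h=U^0(t,x_0)$ for every $h\in C_b(\mathbb{R})$ supplied by Lemma~\ref{con}. Your bookkeeping remark about the implicit $h$-dependence in the notation of \eqref{I0} is a helpful clarification.
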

\begin{remark}
If the coefficients $b_{1}(x,y)$, $\sigma_{1}(x,y)$, $k_{1}(x,y,z)$, $b_{2}(x,y)$, $\sigma_{2}(x,y)$, $k_{2}(x,y,z)$ are independent of $x$, then $H^{0}(x,p)$ becomes $H^{0}(p)$. Using \cite[Lemma D.1]{FFK12}, we obtain
\begin{equation}\label{I10}
I(x,x_{0},t)=tQ^{0}\big(\frac{x_{0}-x}{t}\big),
\end{equation}
where $Q^{0}(\cdot)$ is the Legendre transform of the convex function $H^{0}(\cdot)$.
\end{remark}

\begin{example}
We consider a family of models of the form
\begin{eqnarray}\label{Levy}
\left\{\begin{array}{l}
 dX^{\varepsilon,\delta}_{t}=\varepsilon b_{1}(X^{\varepsilon,\delta}_{t-},Y^{\varepsilon,\delta}_{t-})dt+\sqrt{2\varepsilon}\sigma_{1}(X^{\varepsilon,\delta}_{t-},Y^{\varepsilon,\delta}_{t-})dW_{t}^{(1)}\\~~~~~~~~~~~~~+\varepsilon k_{1}(X^{\varepsilon,\delta}_{t-},Y^{\varepsilon,\delta}_{t-},z)dL_{t}^{\alpha_{1},\frac{1}{\varepsilon}},\quad X^{\varepsilon,\delta}_{0}=x_{0}\in\mathbb{R},  \\[0.5ex]
dY^{\varepsilon,\delta}_{t}=\frac{\varepsilon}{\delta}b_{2}(X^{\varepsilon,\delta}_{t-},Y^{\varepsilon,\delta}_{t-})dt+\sqrt{\frac{2\varepsilon}{\delta}}
\sigma_{2}(X^{\varepsilon,\delta}_{t-},Y^{\varepsilon,\delta}_{t-})\big(\rho dW_{t}^{(1)}+\sqrt{1-\rho^{2}}dW_{t}^{(2)}\big)\\~~~~~~~~~~~~~+ k_{2}(X^{\varepsilon,\delta}_{t-},Y^{\varepsilon,\delta}_{t-},z)dL_{t}^{\alpha_{2},\frac{\varepsilon}{\delta}},\quad Y^{\varepsilon,\delta}_{0}=y_{0}\in\mathbb{R},
 \end{array}\right.
\end{eqnarray}
where parameters $\varepsilon$, $\delta>0$ and $\rho\in(-1,1)$, and coefficients $b_{1}(x,y)$, $b_{2}(x,y)$, $\sigma_{1}(x,y)$, $\sigma_{2}(x,y)$, $k_{1}(x,y,z)$, $k_{2}(x,y,z)$ with $(x,y)\in\mathbb{R}^{2}$ and $|z|>0$, as noted previously.
 $W^{(1)}, W^{(2)}$ are independent Brownian motions independent of two independent $\alpha$-stable L\'evy processes $L_{t}^{\alpha_{1},\frac{1}{\varepsilon}}$, $L_{t}^{\alpha_{2},\frac{\varepsilon}{\delta}} (1<\alpha_{1}, \alpha_{2}<2)$. Here
$L_{t}^{\alpha_{1},\frac{1}{\varepsilon}}\sim\nu_{1}(dz)=\frac{1}{\varepsilon}\frac{1}{|z|^{1+\alpha_{1}}}dz$,
$L_{t}^{\alpha_{2},\frac{\varepsilon}{\delta}}\sim\nu_{2}(dz)=\frac{\varepsilon}{\delta}\frac{1}{|z|^{1+\alpha_{2}}}dz$.

The L\'evy-It$\hat{o}$ decompositions for $L_{t}^{\alpha_{1},\frac{1}{\varepsilon}}$, $L_{t}^{\alpha_{2},\frac{\varepsilon}{\delta}}$ (see \cite{D15,S15}) are
{\footnotesize\begin{align*}
  L_{t}^{\alpha_{1},\frac{1}{\varepsilon}}&=\int_{0<|z|<1}z\tilde{N}^{(1),\frac{1}{\varepsilon}}(dz,t)+\int_{|z|\geq1}zN^{(1),\frac{1}{\varepsilon}}(dz,t)=\int_{\mathbb{R}\setminus \{0\}}z\tilde{N}^{(1),\frac{1}{\varepsilon}}(dz,t)-\frac{1}{\varepsilon}\int_{|z|\geq1}z\nu_{1}(dz),\\
L_{t}^{\alpha_{2},\frac{\varepsilon}{\delta}}  &=\int_{0<|z|<1}z\tilde{N}^{(2),\frac{\varepsilon}{\delta}}(dz,t)+\int_{|z|\geq1}zN^{(2),\frac{\varepsilon}{\delta}}(dz,t)=\int_{\mathbb{R}\setminus \{0\}}z\tilde{N}^{(2),\frac{\varepsilon}{\delta}}(dz,t)-\frac{\varepsilon}{\delta}\int_{|z|\geq1}z\nu_{2}(dz).
\end{align*}}
Since $K_{\nu_{1}}:=\frac{1}{\varepsilon}\int_{|z|\geq1}z\nu_{1}(dz)<\infty$, $K_{\nu_{2}}:=\int_{|z|\geq1}z\nu_{2}(dz)<\infty$, the system \eqref{Levy} can be rewritten into
{\small\begin{eqnarray}\label{llevy}
\left\{\begin{array}{l}
 dX^{\varepsilon,\delta}_{t}=\varepsilon (b_{1}(X^{\varepsilon,\delta}_{t-},Y^{\varepsilon,\delta}_{t-})+K_{\nu_{1}})dt+\sqrt{2\varepsilon}\sigma_{1}(X^{\varepsilon,\delta}_{t-},Y^{\varepsilon,\delta}_{t-})dW_{t}^{(1)}\\~~~~~~~~~+\varepsilon\int_{\mathbb{R}\setminus \{0\}} k_{1}(X^{\varepsilon,\delta}_{t-},Y^{\varepsilon,\delta}_{t-},z)\tilde{N}^{(1),\frac{1}{\varepsilon}}(dz,dt),\quad X^{\varepsilon,\delta}_{0}=x_{0}\in\mathbb{R},  \\[0.5ex]
dY^{\varepsilon,\delta}_{t}=\frac{\varepsilon}{\delta}(b_{2}(X^{\varepsilon,\delta}_{t-},Y^{\varepsilon,\delta}_{t-})+K_{\nu_{2}})dt+\sqrt{\frac{2\varepsilon}{\delta}}
\sigma_{2}(X^{\varepsilon,\delta}_{t-},Y^{\varepsilon,\delta}_{t-})\big(\rho dW_{t}^{(1)}+\sqrt{1-\rho^{2}}dW_{t}^{(2)}\big)\\~~~~~~~~~+\int_{\mathbb{R}\setminus \{0\}} k_{2}(X^{\varepsilon,\delta}_{t-},Y^{\varepsilon,\delta}_{t-},z)\tilde{N}^{(2),\frac{\varepsilon}{\delta}}(dz,dt),\quad Y^{\varepsilon,\delta}_{0}=y_{0}\in\mathbb{R}.
 \end{array}\right.
\end{eqnarray}}
In order to prove the large deviation principle for the slow variables $\{X_{t}^{\varepsilon,\delta}\}_{\varepsilon,\delta>0}$ of system \eqref{Levy},
we can alternatively check that if the system \eqref{llevy} satisfies a large deviation principle. The proof can be illustrated with a series of steps that are similar to that of system \eqref{1}.
\end{example}

\section{Conclusions and future challenges}\label{CFC}
In the present work, we analysed the nonlinear slow-fast stochastic dynamical system \eqref{1} driven by both Brownian noises and
L\'evy noises. We characterised detailedly the limit Hamiltonian $H^{0}$ that has different forms in the three regimes depending on $\alpha>1$, i.e., supercritical case for $\alpha>2$, critical case for $\alpha=2$ and subcritical case for $\alpha<2$. We established the comparison principle, and  verified that the solutions of the Cauchy problem \eqref{sU} with the Hamiltonian $H^{\varepsilon}$ converge to the unique viscosity solution of the Cauchy problem \eqref{sU0} with the limit Hamiltonian $H^{0}$. Moreover, and arguably more importantly,
we derived the large deviation principle for the slow variables $\{X_{t}^{\varepsilon}\}_{\varepsilon>0}$ with an application example.

Since the standard Brownian motion and compound Poisson process are special L\'evy processes belonging to square-integrable martingales admitting second moments, we restrict ourselves to the large deviations with good rate function characterizing the Legendre transform of the log moment generating function. This can be used to provide a further description of the structure of the large deviation principle. However, L\'evy processes are not always integrable where the moment generating function does not exist. Generally, any L\'evy process is a semimartingale. It is interesting and challenging to create new methods computing the probability of a large deviation event for the slow variables in the nonlinear slow-fast stochastic dynamical systems.
\bigskip

\noindent\textbf{ACKNOWLEDGMENTS}

The authors are happy to thank Franziska K\"{u}hn for fruitful discussions on large deviations for L\'evy processes.


\end{document}